\newcommand{\E}{\mathcal{J}}
\newcommand{\R}{\mathbb R}
\newcommand{\N}{\mathbb N}
\def\R{\mathbb{R}}
\def\> {\rightarrow}
\def\S1{\mathbb S^1}
\renewcommand{\div}{\mathrm{div }}
\newcommand{\divs}[1]{{\rm div}^s\left(#1\right)}
\newcommand{\inn}[2]{\left\langle #1,#2\right\rangle}
\newcommand{\poincConst}{\lambda^s_1(\Omega)}
\newcommand{\gam}[1]{\gamma\left(#1\right)}
\newcommand{\Gam}[1]{\Gamma\left(#1\right)}
\newcommand{\gmin}{\gamma_{\rm min}}
\newcommand{\gmax}{\gamma_{\rm max}}
\newcommand{\ginf}{\gamma(\infty)}
\theoremstyle{thmstyleone}%
\newtheorem{theorem}{Theorem}
\newtheorem{proposition}[theorem]{Proposition}%
\theoremstyle{thmstyletwo}%
\theoremstyle{thmstylethree}%
\newtheorem{lemma}{Lemma}%
\begin{document}

\title[Existence of solutions to a quasilinear nonlocal PDE]{Existence of solutions to a quasilinear nonlocal PDE}


\author*[1]{\fnm{Lisbeth} \sur{Carrero}}\email{lisbeth.carrero@uoh.cl}
\equalcont{The three authors contributed equally to this work.}

\author[2]{\fnm{Alexander} \sur{Quaas}}\email{alexander.quaas@usm.cl}
\equalcont{The three authors contributed equally to this work.}

\author[1]{\fnm{Andres} \sur{Zuniga}}\email{andres.zuniga@uoh.cl}
\equalcont{The three authors contributed equally to this work.}

\affil*[1]{\orgdiv{Instituto de Ciencias de la Ingenier\'ia}, \orgname{Universidad de O'Higgins}, \orgaddress{\street{Avenida Alameda 611}, \city{Rancagua}, \country{Chile}}}

\affil[2]{\orgdiv{Departmento de Matem\'atica}, \orgname{Universidad T\'ecnica Federico Santa Mar\'ia}, \orgaddress{\street{Avenida Espana 1680}, \city{Valpara\'iso}, \country{Chile}}}


\abstract{In this paper, we introduce a new class of quasilinear operators, which represents a nonlocal version of the operator studied by Stuart and Zhou~\cite{stuart}, inspired by models in nonlinear optics. We will study the existence of at least one or two solutions in the cone $X := \{ u \in H^{s}_0(\Omega) : u \geq 0 \}$ using variational methods. For this purpose, we analyze two scenarios: the asymptotic sublinear and linear growth cases for the reaction term. Additionally, in the sublinear case, we establish a nonexistence result.}

\keywords{Fractional Sobolev spaces, fractional gradient, fractional divergence, variational methods, existence of solutions, Schechter-Palais-Smale sequence.}


\pacs[MSC2020 Classification]{35A01, 35A02, 35A15, 35B09, 35B38, 35R11, 49J45}

\maketitle

\section{Introduction}\label{sec1}
Before introducing the main problem, we will review some classic and recent concepts regarding the theory of fractional differential operators.

\subsection{A brief review of nonlocal differential operators.}
Fractional differential operators have emerged as fundamental tools in modeling complex phenomena that exhibit nonlocal properties. These operators are particularly effective for capturing global effects and long-range interactions, making them indispensable in diverse fields such as continuum mechanics, finance, biology, physics, population dynamics, game theory, machine learning, and advanced approaches to hyperelasticity, among others (see \cite{Laskin, Ralf, fisica2, fisica, antil, bellido}).
Among these operators, the fractional Laplacian has been the subject of extensive study in the literature since early 2000's. This operator can be defined for $s\in (0,1)$ via its multiplier $|\xi|^{2s}$ in the Fourier space. For suitable functions like $u\in L^{\infty}(\mathbb{R}^{d})\cap C^{2s+ \beta}_{loc}(\mathbb{R}^{d})$ it can also be defined through the singular integral
\begin{equation}\label{fracLapl}
    (-\Delta)^s u(x) := C_{d, s} \;\mathrm{P.V.} \int_{\mathbb{R}^d} \frac{u(x) - u(z)}{|x - z|^{d + 2s}}\,dz,
\end{equation}
for a well-known normalizing constant $C_{d,s}>0$, and where P.V. stands for the principal value in the singular integral. As a matter of fact, there are at least ten equivalent definitions of this fractional Laplacian operator, including the so-called semi-group definition, the celebrated harmonic extension definition by Silvestre and Caffarelli~\cite{caffarelli2007}, the Bochner definiton, Dynkin's definition, among others; see \cite{kwasnicki2017} for a detailed survey on this matter. 

The fractional Laplacian together with the integration by parts formula gives rise to natural notion of fractional Sobolev space $W^{s,p}(\Omega)$, for $p\in[1,+\infty)$ and an open subset $\Omega$ of $\R^d$. It is traditionally defined as an intermediary Banach space between $L^p(\Omega)$ and $W^{1,p}(\Omega)$ via the Gagliardo seminorm
\[
    W^{s,p}(\Omega):=\left\{u\in L^p(\Omega):\; \|u\|_{W^{s,p}(\Omega)}<\infty\right\};
\]
endowed with the natural norm $\|u\|_{W^{s,p}(\Omega)}:=\left(\|u\|^p_{L^p(\Omega)}+[u]^p_{W^{s,p}(\Omega)}\right)^{\frac 1p}$, where
\[
    [u]_{W^{s,p}(\Omega)}:=\left(\int_{\Omega}\int_{\Omega}\frac{|u(x)-u(y)|^p}{|x-y|^{d+sp}}\,dxdy\right)^{\frac{1}{p}}.
\]
A very gentle, yet complete, introduction to the subject of fractional Sobolev spaces can be found in~\cite{valdinoci2011}. The interested reader is also referred to a couple of recently published books on the subject of fractional Sobolev spaces:~\cite{fractional2023} for the connection with functional inequalities, and~\cite{first2023} where the classical Sobolev theory is adapted to this fractional context with applications to wavelets and existence and regularity theory for the Poisson problem.

On the other hand, the notions of \emph{fractional gradient and fractional divergence operator} are relatively new concepts that are still in their early stages of development. The fractional $s$-gradient of a function \(u \in C_{c}^{\infty}(\mathbb{R}^{d})\), for \(s \in (0,1)\), is defined as
\begin{equation}\label{eq:intro:fract_grad}
  \nabla^{s}u(x):= \mu_{n,s} \int\limits_{\mathbb{R}^{n}} \dfrac{(y-x)(u(y)-u(x))}{|y-x|^{d+s+1}}\,dy,
\end{equation}
where $\mu_{d,s}$ is a multiplicative normalization constant controlling the behavior of $ \nabla^{s}u$ as $s\to 1^{-}$. An important feature of the fractional gradient~\eqref{eq:intro:fract_grad} is that it satisfies three natural qualitative requirements as a fractional operator: invariance under translations and rotations, homogeneity of order $s$ under dilations and some continuity properties relative to the Schwartz's space $\mathcal{D}(\R^d)$. A fundamental result shown by ${\rm\check{S}}$ilhav\'y~\cite{sil} is that, in fact, these requirements characterize the fractional gradient, up to a multiplicative constant; see~\cite[Theorem 2.2]{sil}. Whence, from both \emph{a physical and mathematical point of view}, the definition~\eqref{eq:intro:fract_grad} is well-posed. A detailed account on the existing literature on this operator can be found in~\cite[Section 1]{shieh2018}.

Following a similar line as in~\cite{shieh2015}, we define the associated fractional Sobolev space via approximation, as the closure
\[
    X_{0}^{s,p}(\mathbb{R}^{d}):= \overline{C_{c}^{\infty}(\mathbb{R}^{d})}^{\|\cdot\|_{X^{s,p}(\mathbb{R}^{d})}}
\]
endowed with the norm
\[
    \|u\|_{X^{s,p}(\mathbb{R}^{d})} := \left(\|u\|^p_{L^{p}(\mathbb{R}^{d})} + \|\nabla^{s} u\|^p_{L^{p}(\mathbb{R}^{d};\mathbb{R}^{d})}\right)^{\frac 1p},
\]
for all $u\in C_{c}^{\infty}(\mathbb{R}^{d})$. By~\cite[Theorem 2.2]{shieh2015}, we know that
\[
    X^{s+\epsilon,p}_0\subset W^{s,p}(\R^d)\subset X^{s-\epsilon, p}_0(\R^d)
\]
with continuous embeddings for all $s\in (0,1)$, $p\in(1,\infty)$ and $0<\epsilon<\min\{s,1-s\}$. In particular,~\cite[Theorem 2.2]{shieh2015} asserts that in the Hilbert case $p=2$ there actually holds, for any $s\in (0,1)$,
\begin{equation}\label{eq:intro:aux:equivalence}  
    X_{0}^{s,2}(\mathbb{R}^{d})\simeq H^{s}(\mathbb{R}^{d}),\;\text{ with }\;\; H^s(\Omega):=W^{s,2}(\R^d);
\end{equation}
In light of the equivalence~\eqref{eq:intro:aux:equivalence}, there exist positive constants \(C_1\) and \(C_2\) such that
\begin{equation*}
  C_1 \|u\|_{H^{s}(\mathbb{R}^{d})}\leq \|u\|_{X^{s,2}(\mathbb{R}^{d})} \leq C_2\|u\|_{H^{s}(\mathbb{R}^{d})},\quad\forall u\in X^{s,2}_0(\R^d).
\end{equation*}

Furthermore, a similar characterization holds for the so-called \emph{fractional divergence operator} for vector-fields. Namely, for any $s\in (0,1)$,
\[
    \text{div}^{s}\varphi(x):= \mu_{n,s} \int\limits_{\mathbb{R}^{d}} \dfrac{(y-x)\cdot (\varphi(y)-\varphi(x))}{|y-x|^{d+s+1}}dy,\quad\forall \varphi \in \mathcal{D}(\R^d;\R^d).
\]
As one could expect, the nonlocal gradient and nonlocal divergence are naturally related to the classical fractional Laplacian (see~\cite[Theorem 5.3]{sil}), since in particular,
\[
    -\text{div}^{s}\nabla^{s}u(x)= (-\Delta)^s u(x). 
\]
Moreover, as it is observed in \cite[Section 6]{shieh2015}, the fractional and fractional divergence \emph{are dual}, in the sense that
\[
    \int_{\R^d}u\, \div^{s}\varphi\,dx=-\int_{\R^d}\inn{\varphi}{\nabla^su}\,dx,   
\]
for all $u\in C^{\infty}_{c}(\R^d)$ and $\varphi\in C^{\infty}_{c}(\R^d;\R^dm,4)$, where here and henceforth $\inn{\cdot}{\cdot}$ denotes the Euclidean inner product in $\R^d$. A detailed account of these new fractional differential operators can be found in the seminal work of Shieh and Spector~\cite{shieh2015,shieh2018}, Comi and Stefani~\cite{gio}, and ${\rm\check{S}}$ilhav\'y~\cite{sil}.\smallskip

A different point of view that has been used, over the past decade, to define similar operators (which turn out to be equivalent to those aforementioned). It is based on exploiting the continuum nature of the fractional exponent $s\in (0,1)$ over the fractional Laplacian operator. Concretely, given $s\in (0,1)$ we can define a couple of \emph{signed fractional gradient operators} as follows 
\[
    \nabla^s_+:=(-\Delta)^{\frac s2} \quad\text{ and }\quad \nabla^s_-:=\nabla(-\Delta)^{\frac{s-1}2}. 
\]
If $u\in C^{0,s+\epsilon}_{loc}(\R^d)$ for some $\epsilon\in (0,1-s)$ and $(1+|x|)^{d+s}u(x)\in L^{1}(\R^d)$, then
\[
    \nabla^s_+u(x)=c_{n,s,+}\int_{\R^d}\frac{u(x)-u(y)}{|x-y|^{d+s}}dy \quad\text{ and }\quad \nabla^s_-u(x)=c_{n,s,-}\int_{\R^d}\frac{(x-y)(u(x)-u(y))}{|x-y|^{d+s+1}}dy 
\]
The fractional couple $\nabla^s_{\pm}$ is related to the dual pair of fractional divergence couple, via
\[
    +\div^s_+:=[\nabla^s_+]^*=(-\Delta)^{\frac s2}\quad\text{ and }\quad -\div^s_-:=[\nabla^s_-]^*=\div(-\Delta)^{\frac{s-1}2}.
\]
Foundational theory on these operators and related ones, including calculus rules for $\nabla^s_{\pm}$ and $\pm\div^s_{\pm}$, has been developed by different groups of mathematicians; we refer the interested reader to~\cite{biler2015, caffarelli2011,capella2011,liu2021-2,liu2022, mazowiecka2018, shieh2015, shieh2018, sil}, and the references therein. 

On a very recent article~\cite{liu2024}, Liu, Sun and Xiao prove that existence of weak solutions for the two versions of the nonlocal $p$-Laplacian operator, namely,
\[
    \Delta_{s,(p,\pm)}u:=\pm\div^s_{\pm}(|\nabla^s_{\pm}u|^{p-2}\nabla^s_{\pm}u)
\]
is indeed equivalent to either the analytic Sobolev inequality of fractional order, or the geometric isocapacitary inequality of fractional order. Furthermore, the authors construct an explicit fundamental solution, in the entire space, to both fractional $(p,+)$-Laplacian and $(p,-)$-Laplacian, as well as to establish Liouville-type principles for these fractional operators.

The topic of general nonlocal PDEs (quasilinear and fully nonlinear type) has slowly grown over the years to a hot topic, with plenty of open problems due to the mathematical difficulties associated with the study of nonlocal objects.

\subsection{Motivation of the problem and main results.}
In this article, we focus on a new class of nonlocal operators, drawing inspiration from Stuart's and Zhou's contributions. More specifically, we are interested in studying an appropriate nonlocal version of the second order quasilinear operator defined by
\begin{equation*}
  \emph{L}u := - \text{div} \left[ \gamma \left( \frac{u^2 + |\nabla u|^2}{2} \right) \nabla u \right] + \gamma \left( \frac{u^2 + |\nabla u|^2}{2} \right) u,
\end{equation*}
where $\gamma: [0, \infty) \to \mathbb{R}$ is a positive continuous function. In the early 2000's, Stuart and Zhou introduced this operator (see~\cite{shsz, zhou, cas}) in the study of the eigenvalue problem with boundary condition
\begin{equation}\label{stu}
  \left\{
  \begin{array}{rll}
   \emph{L}u &= \lambda u + h  & \;\; \text{in} \;\;\Omega, \\
   u &= 0 & \;\; \text{on} \;\;\partial\Omega.
  \end{array}
\right.\\[.25em]
\end{equation}
The physical motivation for stuying such a problem comes from the mathematical modeling in nonlinear optics, where the boundary value problem~\eqref{stu} describes the propagation of a self-trapped beam in a cylindrical optical fiber made of a self-focusing dielectric material. This PDE is obtained by reducing Maxwell's systems of equations in electroestatics to a single equation, under an ansatz of radial symmetry over planes perpendicular to the optical fiber (for more details, refer to~\cite{shsz}).

Building on Stuart-Zhou's pioneering work, Jeanjean and Radulescu extended the investigation to more general settings. In~\cite{radulescu}, they considered equations of the type
\begin{equation}\label{jr}
  \left\{
  \begin{array}{rll}
   \emph{L}u &= f(u) + h  & \;\; \text{in} \;\;\Omega, \\
   u &= 0 & \;\; \text{on} \;\;\partial\Omega.
  \end{array}
\right.
\end{equation}
where $f$ is a given continuous function and $h \in L^2(\Omega)$ is non-negative. The task of searching for solutions to these equations required the use of more sophisticated variational techniques, reflecting the challenges and insights first presented by Stuart and Zhou. 

This type of problem has attracted the interest of the PDE community. Some recent state-of-the-art results involving generalizations of the operator $L$, include~\cite{pomponio, wantin}.\smallskip

We seek to extend the results of Jeanjean and Radulescu to the nonlocal fractional setting. As a starting point, we analyze the problem in a simplified form, where the operator depends only on $\nabla^{s} u$. Our primary objective is to investigate the existence of solutions, in dimensions $d \geq 2$, to the following quasilinear nonlocal partial differential equation:
\begin{equation}\label{prob}
\left\{
\begin{array}{lll}
    -\!&\divs{\gam{\dfrac{|\nabla^su|^2}{2}}\nabla^su}=f(u)+h
    & \text{ in }\;\Omega \\
     & u=0 & \text{ in }\;\R^d\setminus\Omega
\end{array}
    \right.    
\end{equation}
where \(\gamma: [0, \infty) \to \R\) is a nonlinear heterogeneity in the fractional diffusivity with exponent \(s \in (0,1) \), \(h \in L^2(\Omega)\) is non-negative right side, and $f:\R\to\R$ is a continuous reaction term that exhibits (some of) the following behavior:\smallskip
\begin{enumerate}[label= $(\pmb{f\arabic*})$,itemsep=0.25em]
    \item\label{item:f:1} $\limsup\limits_{t\to 0}\dfrac{f(t)}{t}<\gmin\,\poincConst$,
    \item\label{item:f:2}$\limsup\limits_{t\to+\infty}\dfrac{f(t)}{t^p}=0$, for some $p\in(1,2^*_s-1)$.
    \item\label{item:f:3} $\liminf\limits_{t\to+\infty}\dfrac{f(t)}{t}\geq\ginf\, \poincConst$,
    \item \label{item:f:4}$\liminf\limits_{t\to+\infty}\dfrac{f(t)}{t}<\infty$.\\
\end{enumerate}
where $\poincConst$ denotes the first eigenvalue of fractional laplacian $(-\Delta)^s$ over $\Omega$.

Concerning the coefficient $\gamma$, it satisfies 
\begin{enumerate}[label=$(\pmb{\gamma\arabic*})$,itemsep=0.25em]
  \item\label{item:gamma:1} $\gamma\in C([0,\infty))$ and there are positive constants $\gmin,\gmax$ such that
    $$
        0<\gmin \leq \gamma(t) \leq\gmax,\qquad \forall t \in [0, + \infty).
    $$
    In addition, the limit $\ginf:=\lim\limits_{t\to+\infty}\gamma(t)\in(0,\infty)$ exists.
  \item\label{item:gamma:2} The map $t \mapsto \Gamma(t^{2})$ is convex in $[0, + \infty)$, where $\Gamma(t):= \int\limits_{0}^{t} \gamma(s)ds$.
\end{enumerate}
An example of such coefficient $\gamma$ is given by
 $$
 \Gamma(t)= At +B[(1+t)^{p/2}-1],
 $$
 where $A$ y $B$ are positive constants, and $1<p<2$.
 
 Let us now observe that, in view of~\ref{item:f:4}, there exists \( t_0 > 0 \) in such a way that
\[
    f(t) \leq C t, \;\;\text{ for }\; t> t_0.
\] 
Our variational line of arguments require us to assume that there exists \( R \geq t_0 \) for which the following condition holds
\begin{equation}\label{condbound}
R^2\geq CR^2 + \|h\|_{L^2(\Omega)}R.
\end{equation}

Our first main result states the following:\smallskip

\begin{theorem}\label{thm:1}
Assume that $h\in L^2(\Omega)$, $h\geq 0$  and that the condition \eqref{condbound} holds. Then,\smallskip 
    \begin{enumerate}[label=\roman*.,itemsep=0em]
    \item If conditions~\ref{item:f:1}-\ref{item:f:2} and~\ref{item:gamma:1} are satisfied, then for $ \|h\|_{L^2(\Omega)}$ sufficiently small, Problem~\eqref{prob} admits at least one non-negative solution. Furthermore, this solution is nontrivial if $h\ne 0$.
        \item If conditions~\ref{item:f:1}-\ref{item:f:3}-\ref{item:f:4} and~\ref{item:gamma:2} are satisfied, then for $ \|h\|_{L^2(\Omega)}$ sufficiently small, the non-homogeneous problem \eqref{prob}$(\text{i.e. }h\ne 0)$ admits at least two non-negative solutions, and the homogeneous problem~\eqref{prob} $(\text{i.e. }h=0)$ admits at least one nontrivial non-negative solution.
    \end{enumerate}
\end{theorem}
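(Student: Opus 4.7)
The plan is to apply variational methods to the natural energy functional
\begin{equation*}
J(u):=\int_{\R^d}\Gam{\dfrac{|\nabla^s u|^2}{2}}\,dx - \int_{\Omega}F(u^+)\,dx - \int_{\Omega}hu\,dx,\qquad u\in H^s_0(\Omega),
\end{equation*}
where $F(t):=\int_0^t f(\tau)\,d\tau$ and $u^+:=\max\{u,0\}$ is introduced so that critical points of $J$ automatically yield non-negative weak solutions of~\eqref{prob}. Under~\ref{item:gamma:1}, $\Gamma$ grows linearly, so the quasilinear term is equivalent to $\tfrac{1}{2}\|\nabla^s u\|_{L^2}^2$ via~\eqref{eq:intro:aux:equivalence}, while the subcritical growth coming from~\ref{item:f:2} in part~(i) or the at-most-linear growth from~\ref{item:f:4} in part~(ii) makes $J$ of class $C^1$ on $H^s_0(\Omega)$.

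For a first solution in both parts, I would show that $J$ admits a local minimizer near the origin. Combining~\ref{item:f:1} with $\Gamma(t)\geq\gmin\, t$ and the fractional Poincar\'e inequality yields a lower bound of the form $J(u)\geq c\,\|u\|_{H^s_0}^2+o(\|u\|^2)-C\,\|h\|_{L^2(\Omega)}\|u\|_{H^s_0}$ for small $\|u\|$, with $c>0$. Direct minimization over $\overline{B_R}\subset H^s_0(\Omega)$---where $R\geq t_0$ is chosen so that~\eqref{condbound} holds, precisely so that the linear control $f(t)\leq Ct$ for $t>t_0$ plus the forcing cannot push the infimum to the boundary---produces an interior minimizer $u_1\geq 0$. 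When $h\not\equiv 0$, evaluation at $\varepsilon\varphi_1$, with $\varphi_1>0$ the first eigenfunction of $(-\Delta)^s$, gives $J(u_1)\leq J(\varepsilon\varphi_1)=O(\varepsilon^2)-\varepsilon\int h\varphi_1<0$ for $\varepsilon>0$ small, so $u_1\not\equiv 0$.

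For the second solution in part~(ii), I would employ a mountain pass argument. The assumption~\ref{item:f:3}, combined with the asymptotic $\Gamma(t)\leq\ginf\, t+o(t)$ ensured by~\ref{item:gamma:1}, lets us produce along the ray $t\mapsto t\varphi_1$ a point $v_0$ with $\|v_0\|>R$ and $J(v_0)\leq J(u_1)$, so the mountain pass geometry is available. Because the merely linear growth in~\ref{item:f:4} precludes the Ambrosetti-Rabinowitz condition and hence global Palais-Smale, I would invoke a Schechter-type variant of the mountain pass theorem---as advertised in the keywords---to extract a \emph{bounded} Palais-Smale-type sequence $\{u_n\}$ at the minimax level; the same scheme applied with $h=0$ yields a critical point at a strictly positive minimax level, automatically nontrivial.

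The main obstacle is the passage to the limit in this bounded Palais-Smale sequence. The quasilinear dependence on $|\nabla^s u|^2$ combined with the nonlocal character of $\nabla^s$ prevents a direct Brezis-Lieb argument based on pointwise convergence; to handle it I would exploit~\ref{item:gamma:2}, which endows $u\mapsto\gam{|\nabla^s u|^2/2}\nabla^s u$ with a monotonicity property of Browder-Minty type, and pair it with a careful decomposition of the nonlinear term to upgrade $u_n\rightharpoonup u_2$ to strong convergence in $H^s_0(\Omega)$. Non-negativity of each solution is finally recovered by testing~\eqref{prob} against $u^-$ and using $h\geq 0$ together with the sign property of the nonlocal operator on the negative part.
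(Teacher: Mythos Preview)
Your overall variational strategy is close in spirit to the paper's, and the ingredients you list---the local minimum geometry from~\ref{item:f:1} and~\ref{item:gamma:1}, the ray $t\varphi_1$ driven down by~\ref{item:f:3}, the Schechter-type mountain pass to compensate for the absence of Ambrosetti--Rabinowitz under~\ref{item:f:4}, and the monotonicity coming from~\ref{item:gamma:2} to upgrade weak to strong convergence---are precisely the ones the paper assembles.

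However, there is a genuine gap in your final step. You propose to work in all of $H^s_0(\Omega)$, replace $F(u)$ by $F(u^+)$, and recover non-negativity \emph{a posteriori} by testing the equation against $u^-$. This is exactly the manoeuvre the paper singles out as \emph{failing} for the operator at hand. The reason is that, unlike the local case where $\nabla u^+\cdot\nabla u^-=0$ a.e., for the Riesz fractional gradient one has in general
\[
\int_{\R^d}\inn{\nabla^s u^+}{\nabla^s u^-}\,dx\neq 0,
\]
and the paper exhibits an explicit one-dimensional example where this quantity is strictly negative. Once the cross term is nonzero \emph{and} carries the nonconstant weight $\gamma(|\nabla^s u|^2/2)$, the expression
\[
\int_{\R^d}\gam{\dfrac{|\nabla^s u|^2}{2}}\inn{\nabla^s u}{\nabla^s u^-}\,dx
=\int_{\R^d}\gam{\dfrac{|\nabla^s u|^2}{2}}\inn{\nabla^s u^+}{\nabla^s u^-}\,dx-\int_{\R^d}\gam{\dfrac{|\nabla^s u|^2}{2}}|\nabla^s u^-|^2\,dx
\]
has no obvious sign, so the inequality $\text{LHS}\leq 0\leq\text{RHS}$ that forces $u^-\equiv 0$ in the classical argument simply does not close. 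There is also no maximum principle available for this quasilinear nonlocal operator to fall back on.

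The paper's fix is structural: rather than truncating $F$ and arguing in $H^s_0(\Omega)$, it works from the outset in the cone $X=\{u\in H^s_0(\Omega):u\geq 0\}$ and appeals to the critical point theory on convex closed sets of Precup--Varga~\cite{radu}. The role of condition~\eqref{condbound} is then not to keep a minimizer in the interior of a ball, as you suggest, but to verify the abstract boundary condition in that framework (ruling out the existence of $u\in\partial X_R$ with $\E'(u)+\rho Ju=0$ for some $\rho>0$), and the compactness statement needed is the \emph{Schechter}--Palais--Smale condition on $X_R$, for which the paper checks three separate boundary scenarios. If you rewrite your argument inside the cone using that machinery, the remaining pieces of your outline essentially match the paper's proof.
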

\medskip

The techniques we use to prove the Theorem~\ref{thm:1} are variational in nature. We will establish the existence of two solutions: one by minimizing the energy functional and a second one by invoking the mountain pass theorem. An interesting observation is that we cannot directly apply the usual techniques in the space $H_0^s(\Omega)$, due to a \emph{lack of a maximum principle} for the associated nonlocal quasilinear operator. As far as the authors are aware, this issue remains an open problem. Moreover, another key observation consists of noting that the classical variational strategy for establishing the nonnegativity of solutions cannot be replicated in this fractional framework. In order to prove that a solution \( u \) of \eqref{prob} is nonnegative, it requires that \(\int_{\R^d} \inn{\nabla^{s} u_{+}(x)}{\nabla^{s} u_{-}(x) }dx \neq 0\).

Indeed, in the one-dimensional case, let us consider the identity function. Notice that\smallskip
\begin{equation*}
\nabla^{s} u_{+}(x) = \int_{-\infty}^{0} \dfrac{-(y-x)x}{|y-x|^{n+s+1}}dy + \int_{0}^{+\infty} \dfrac{(y-x)^2}{|y-x|^{n+s+1}}dy>0, \quad \textbf{if} \quad x>0.
\end{equation*}
\begin{equation*}
\nabla^{s} u_{+}(x) = \int_{0}^{+\infty} \dfrac{(y-x)y}{|y-x|^{n+s+1}}dy>0, \quad \textbf{if} \quad x<0.
\end{equation*}
\begin{equation*}
\nabla^{s} u_{-}(x) = \int_{-\infty}^{0} \dfrac{-(y-x)x}{|y-x|^{n+s+1}}dy<0, \quad \textbf{if} \quad x>0.
\end{equation*}
\begin{equation*}
\nabla^{s} u_{-}(x) = \int_{-\infty}^{0} \dfrac{-(y-x)^2}{|y-x|^{n+s+1}}dy + \int_{0}^{+\infty} \dfrac{(y-x)x}{|y-x|^{n+s+1}}dy<0, \quad \textbf{if} \quad x<0.
\end{equation*}

Hence, 
\begin{equation*}
\int_{\R} \nabla^{s} u_{+}(x)\nabla^{s} u_{-}(x)= \int_{0}^{+\infty} \nabla^{s} u_{+}(x)\nabla^{s} u_{-}(x) + \int_{-\infty}^{0} \nabla^{s} u_{+}(x)\nabla^{s} u_{-}(x)<0.
\end{equation*}

Since we are interested in positive solutions, we will seek a solution in the set 
\[
    X := \{ u \in H^{s}_0(\Omega) : u \geq 0 \}.
\]
We will apply some results by Precup and Varga~\cite{radu} to guarantee the existence of solutions in \( X \).\\

In addition, we study the case when $f$ has asymptotic sublinear growth, that is, $f(t)=\nu g(t)$ for some $\nu >0$, where $g: \R \to \R$ is a continuous function satisfying \smallskip

\begin{enumerate}[label= $(\pmb{g\arabic*})$,itemsep=0.25em]
    \item\label{item:g:1} $g(t)=o(t)$ as $t\to + \infty$.
    \item\label{item:g:2} $G(t_0):= \int^{t_0}_{0} g(r)dr >0$, for some $t_0>0$.
    \item\label{item:g:3} $|g(t)|=O(|t|)$.    
\end{enumerate}
\smallskip

In the case, by a similar argument to that used to obtain \eqref{condbound}. There exists \( t_0 \), obtained from \ref{item:g:1}, such that there is an \( R \geq t_0 \) for which the following holds

\begin{equation}\label{condboundg}
R^2 \geq C R^2 + \|h\|_{L^2(\Omega)} R.
\end{equation}
Our second main result is the following:\smallskip

\begin{theorem}\label{thm:2}
Assume that conditions ~\ref{item:gamma:1}-\ref{item:gamma:2} are satisfied, $h\in L^2(\Omega)$, $h\geq 0$ with $\|h\|_{L^2(\Omega)}$ sufficiently small and that the condition \eqref{condboundg} holds. Then,\smallskip
 \begin{enumerate}[label=\roman*.,itemsep=0.25em]
    \item If~\ref{item:g:1} holds and $h\ne 0$. Then, Problem~\eqref{prob} has at least one non-negative solution, for every $\nu >0$.
        \item  In the case $h\equiv 0$,\smallskip  
          \begin{enumerate}[label={\em(\alph*)},itemsep=0.25em]
              \item If conditions ~\ref{item:g:1}-\ref{item:g:2} are satisfied, there exists $\nu_1 >0$ so that~\eqref{prob} has at least one non-negative nontrivial solution, for any $\nu>\nu_1$.
              \item If conditions ~\ref{item:g:1}-\ref{item:g:3} are satisfied, there exists $\nu_0>0$ so that~\eqref{prob} does not admit nontrivial solutions, provided $0<\nu <\nu_0$.
          \end{enumerate}
    \end{enumerate}
\end{theorem}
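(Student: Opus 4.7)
\medskip

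\noindent \textbf{Proof proposal for Theorem~\ref{thm:2}.}
The plan is to analyze the energy functional
\[
    J_\nu(u) := \int_{\R^d} \Gamma\!\left(\tfrac{|\nabla^s u(x)|^2}{2}\right)dx - \nu\!\int_\Omega G(u)\,dx - \int_\Omega h\,u\,dx,
\]
defined on the closed convex cone $X=\{u\in H^s_0(\Omega):u\geq0\}$, and to apply the Precup--Varga abstract framework~\cite{radu} to promote a minimizer (or a mountain-pass type critical point) in the cone into a bona fide non-negative weak solution of~\eqref{prob}; this bypasses the missing nonnegativity argument discussed after Theorem~\ref{thm:1}.

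For part (i), where $h\not\equiv0$ is arbitrary and only~\ref{item:g:1} is in force, I would first verify that $J_\nu$ is coercive on $X$ for every $\nu>0$. Using~\ref{item:gamma:1} we have the lower bound $\Gamma(t)\geq \gmin t$, which yields $\int_{\R^d}\Gamma(|\nabla^s u|^2/2)\,dx\geq\tfrac{\gmin}{2}\|\nabla^s u\|_{L^2}^2$. Combining the sublinear decay in~\ref{item:g:1} (which gives $|G(t)|\leq \epsilon t^2+C_\epsilon$ for every $\epsilon>0$), the fractional Poincaré inequality $\|u\|_{L^2}^2\leq\poincConst^{-1}\|\nabla^s u\|_{L^2}^2$, and the equivalence of $H^s$ and $X^{s,2}_0$ norms recalled in~\eqref{eq:intro:aux:equivalence}, I can choose $\epsilon$ small enough (depending on $\nu$) so that the quadratic part dominates. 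Then I would establish weak lower semicontinuity: the $\Gamma$-term is w.l.s.c.\ by the convexity assumption~\ref{item:gamma:2} on $t\mapsto\Gamma(t^2)$, while the $G$ and $h$ terms are weakly continuous via the compact embedding $H^s_0(\Omega)\hookrightarrow L^q(\Omega)$ for $q<2^*_s$. Since $X$ is weakly closed, a minimizer $u_\nu\in X$ exists. The Precup--Varga machinery then certifies that this minimum is a critical point in the cone, hence a non-negative weak solution of~\eqref{prob}; condition~\eqref{condboundg} plus smallness of $\|h\|_{L^2}$ enters to guarantee that the minimizer lies inside the a priori ball of radius $R$ where the argument is carried out.

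For part (ii)(a), with $h\equiv0$ and~\ref{item:g:2} available, the same coercivity/w.l.s.c.\ framework provides a minimizer, so the task reduces to showing it is nontrivial for $\nu$ large. I would produce a test function $\phi\in X$ with $\int_\Omega G(\phi)\,dx>0$: by~\ref{item:g:2}, $G(t_0)>0$, and using continuity of $G$ I can design a smooth compactly supported $\phi$ that is close to $t_0$ on a set of positive measure. Then
\[
    J_\nu(\phi)\leq \gmax\!\int_{\R^d}\tfrac{|\nabla^s\phi|^2}{2}\,dx-\nu\!\int_\Omega G(\phi)\,dx<0,
\]
as soon as $\nu>\nu_1$, where $\nu_1$ is the ratio of these two fixed positive quantities. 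Since $J_\nu(0)=0$, the minimizer cannot be trivial.

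For the nonexistence result (ii)(b), the approach is direct and quantitative. Let $u\in X$ be a weak solution of~\eqref{prob} with $h\equiv0$. Testing the equation against $u$ itself (which is admissible since the equation is posed in duality with $H^s_0(\Omega)$), using~\ref{item:gamma:1}, the equivalence~\eqref{eq:intro:aux:equivalence}, and the linear growth bound $|g(t)|\leq M|t|$ afforded by~\ref{item:g:3}, I obtain
\[
    \gmin\!\int_{\R^d}|\nabla^s u|^2\,dx\leq \nu\!\int_\Omega g(u)\,u\,dx\leq \nu M\|u\|_{L^2(\Omega)}^2\leq \frac{\nu M}{\poincConst}\int_{\R^d}|\nabla^s u|^2\,dx.
\]
Hence for any $\nu<\nu_0:=\gmin\poincConst/M$ the inequality forces $\nabla^s u\equiv0$, and therefore $u\equiv0$ by the Poincaré inequality. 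The main obstacle I anticipate is the rigorous verification that the Precup--Varga formalism applies in this nonlocal quasilinear setting; in particular, one must check that the Gâteaux differential of the $\Gamma$-term on $X$ identifies with the nonlocal operator $-\divs{\gamma(\cdot)\nabla^s u}$ in the sense of distributions, and that the inward-tangency condition needed for the cone-critical-point theorem is met. All other ingredients (coercivity, semicontinuity, growth estimates) follow by straightforward adaptation of the classical local arguments, assisted by the duality and embedding results collected in the introduction.
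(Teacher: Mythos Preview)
Your proposal is correct and follows essentially the same route as the paper: coercivity of the energy via $\Gamma(t)\geq\gmin t$ and~\ref{item:g:1}, weak lower semicontinuity from the convexity in~\ref{item:gamma:2}, application of the Precup--Varga cone-critical-point framework with the radius condition~\eqref{condboundg}, a Tietze-type test function near $t_0$ to force negative energy for large $\nu$, and the direct testing argument for nonexistence. The only point the paper makes slightly more explicit is the verification of the ``boundedness condition'' required by~\cite[Theorem~2.10]{radu}, which is done by contradiction from~\eqref{condboundg}; you allude to this but do not spell it out.
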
\smallskip

The existence of a solution to the boundary value problem~\eqref{prob}, in  Theorem~\ref{thm:2}, will be established by arguing that energy functional attains a minimum in the energy space. For this, we adapt the strategy used in~\cite[Theorem 2.10]{radu}.\medskip

As the boundary value problem~\eqref{prob} has variational structure, we introduce the associated energy functional $\E:H^s_0(\Omega)\to\R$ as follows
\begin{equation}\label{eq:energy}
    \E(u):= \int_{\Omega} \Gam{\frac{|\nabla^{s} u|^2}{2}}dx - \int_{\Omega} F(u)dx -\int_{\Omega}hu\,dx,
\end{equation}
where $F(t):= \int\limits_{0}^{t} f(\tau)\,d\tau$. One can readily check that $\mathcal{J}$ is Frech\^{e}t-differentible in $H_0^s(\Omega)$, with derivative $\E(u)\in H^{-s}_0(\Omega)$ equal to
\begin{equation}\label{eq:energy:derivative}
    \E'(u)[\varphi] = \int_{\Omega} \gam{\frac{|\nabla^{s} u|^2}{2}}\inn{\nabla^{s} u}{\nabla^{s}\varphi}dx - \int_{\Omega} f(u)\varphi dx -\int_{\Omega} h\varphi\,dx,
\end{equation}
for any $\varphi\in H_0^s(\Omega)$.
In order to adapt the techniques from~\cite{radu}, we observe that it is not possible to define functionals \(J\) and \(J^{-1}\) exactly as in~\cite{radu}, as the operator under consideration is quasilinear. Nevertheless, we overcome the issue by noting that our operators are equivalent to those associated with \cite{radu}. For this purpose, and henceforth, we denote 
\[
    J:H^s_0(\Omega)\to\R,\quad J(\varphi):= \int_{\Omega} \gam{\frac{|\nabla^{s} u|^2}{2}}\inn{\nabla^{s} u}{\nabla^{s}\varphi}dx,
\]
where $u\in H^s_0(\Omega)$ has been fixed a priori (and it will be clear from the context). We also let $\bar{J}=J^{-1}$.
\section{Existence of a local minimum of $\E$}\label{sec2}
In this section, we prove the existence of a solution for Problem~\eqref{prob}, via minimization.
\smallskip 

\begin{lemma}\label{lemwsl}
    $\E$ is weakly sequentially lower semi-continuous over $H^s_0(\Omega)$.
\end{lemma}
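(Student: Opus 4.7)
The plan is to decompose the functional as $\E(u)=\Phi(u)-\Psi(u)-L(u)$, where
\[
\Phi(u):=\int_\Omega \Gam{\tfrac{|\nabla^s u|^2}{2}}dx,\quad \Psi(u):=\int_\Omega F(u)\,dx,\quad L(u):=\int_\Omega hu\,dx,
\]
and to show that $\Phi$ is convex and strongly continuous (hence weakly sequentially lower semi-continuous), while $\Psi$ and $L$ are weakly sequentially continuous. The sum of a weakly lsc functional and two weakly continuous functionals is weakly lsc, which yields the claim.

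For the convexity of $\Phi$, I would argue pointwise on the integrand. Define $\Psi_0:\R^d\to\R$ by $\Psi_0(v):=\Gam{|v|^2/2}$. Writing $|v|^2/2=\bigl(|v|/\sqrt 2\bigr)^2$, the map $v\mapsto \Psi_0(v)$ is the composition of the convex function $\R^d\ni v\mapsto |v|/\sqrt 2$ with the map $t\mapsto \Gamma(t^2)$, which is convex on $[0,\infty)$ by hypothesis \ref{item:gamma:2} and nondecreasing since $\Gamma'=\gamma\geq \gmin>0$ by \ref{item:gamma:1}. The composition of a nondecreasing convex function with a convex function is convex, so $\Psi_0$ is convex on $\R^d$. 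Consequently $u\mapsto\int_\Omega \Psi_0(\nabla^s u)\,dx$ is convex on $H^s_0(\Omega)$. The upper bound $\Gamma(t)\leq \gmax t$ from \ref{item:gamma:1} yields $0\leq \Phi(u)\leq \tfrac{\gmax}{2}\|\nabla^s u\|_{L^2(\R^d;\R^d)}^2$, so $\Phi$ is continuous on $H^s_0(\Omega)$ in view of the equivalence~\eqref{eq:intro:aux:equivalence}. Since a convex, strongly continuous functional on a reflexive Banach space is weakly sequentially lower semi-continuous, $\Phi$ is wsl-sc.

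For the reaction term, I would combine the growth hypothesis \ref{item:f:2} with the Sobolev embedding. Condition \ref{item:f:2} together with continuity of $f$ yields a constant $C>0$ and $p\in(1,2^*_s-1)$ such that $|f(t)|\leq C(1+|t|^p)$, hence $|F(t)|\leq C(|t|+|t|^{p+1})$ with $p+1<2^*_s$. By the compact embedding $H^s_0(\Omega)\hookrightarrow L^{p+1}(\Omega)$, any sequence $u_n\rightharpoonup u$ in $H^s_0(\Omega)$ satisfies $u_n\to u$ strongly in $L^{p+1}(\Omega)$ (and in $L^1(\Omega)$). The Nemytskii operator associated to $F$ is continuous from $L^{p+1}(\Omega)$ into $L^1(\Omega)$, so $\Psi(u_n)\to\Psi(u)$. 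Finally $L(u_n)\to L(u)$ by weak convergence against $h\in L^2(\Omega)\subset H^{-s}(\Omega)$.

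Putting these pieces together, for any $u_n\rightharpoonup u$ in $H^s_0(\Omega)$,
\[
\E(u)=\Phi(u)-\Psi(u)-L(u)\leq \liminf_{n\to\infty}\Phi(u_n)-\lim_{n\to\infty}\Psi(u_n)-\lim_{n\to\infty}L(u_n)=\liminf_{n\to\infty}\E(u_n),
\]
which is the desired inequality. The main subtlety is the convexity step: one must properly exploit both pieces of \ref{item:gamma:1}--\ref{item:gamma:2} (positivity of $\gamma$ to ensure monotonicity of $\Gamma$, and convexity of $t\mapsto\Gamma(t^2)$) to conclude that $v\mapsto \Gam{|v|^2/2}$ is convex in the vector variable, rather than merely in the scalar $|v|$.
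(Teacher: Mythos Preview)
Your proof is correct and follows essentially the same strategy as the paper: both establish weak lower semicontinuity of $\Phi(u)=\int_\Omega\Gamma(|\nabla^s u|^2/2)\,dx$ via its convexity, which is exactly what \ref{item:gamma:1}--\ref{item:gamma:2} are designed to give. The only difference is packaging: the paper writes out the first-order convexity inequality $\Phi(u_1)\geq\Phi(u_2)+\Phi'(u_2)[u_1-u_2]$ by hand (scalar convexity of $t\mapsto\Gamma(t^2)$ combined with Cauchy--Schwarz to pass from $|\nabla^s u_2|\,|\nabla^s u_1|$ to $\nabla^s u_2\cdot\nabla^s u_1$) and then lets $u_1=w_n\rightharpoonup w=u_2$, whereas you obtain pointwise convexity of $v\mapsto\Gamma(|v|^2/2)$ cleanly by composition and invoke the abstract principle that convex plus norm-continuous implies weakly lsc; you are also more explicit than the paper about the weak continuity of the lower-order terms $\Psi$ and $L$.
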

\begin{proof}
It is sufficient to prove that 
\begin{equation}\label{eq:def:Phi}
    \Phi (u):= \int_{\Omega} \Gam{\frac{|\nabla^{s} u|^2}{2}}dx, \quad u\in H_{0}^s(\Omega).   
\end{equation}
is weakly sequentially lower semi-continuous. Indeed, the convexity of $g(z):=\Gamma (z^2)$ (see~\ref{item:gamma:2}) guarantees
\begin{eqnarray*}
      \Phi(u_1)-\Phi (u_2) &=& \int_{\Omega} g\left(\frac{|\nabla^{s} u_1|}{\sqrt{2}}\right)dx - \int_{\Omega} g\left(\frac{|\nabla^{s} u_2|}{\sqrt{2}}\right)dx\\[0.25em] 
      &\geq& \int_{\Omega} g'\left(\frac{|\nabla^{s} u_2|}{\sqrt{2}}\right) \left(\frac{|\nabla^{s} u_1|}{\sqrt{2}}-\frac{|\nabla^{s} u_2|}{\sqrt{2}}\right) dx\\[0.25em]
      &\geq& \int_{\Omega} \gamma\left(\frac{|\nabla^{s} u_2|^2}{2}\right)\sqrt{2}|\nabla^{s} u_2 |\left(\frac{|\nabla^{s} u_1|}{\sqrt{2}}-\frac{|\nabla^{s} u_2|}{\sqrt{2}} \right) dx.
\end{eqnarray*}
for every $u_1,u_2 \in H_0^s(\Omega)$. In particular, in light of~\eqref{eq:energy:derivative}, we see
\begin{align}
    \Phi (u_1)- \Phi (u_2) - & \Phi'(u_2)[u_1-u_2]\nonumber\\[0.25em]
    &\geq\int_{\Omega} \gam{\frac{|\nabla^{s} u_2|^2}{2}}\left(|\nabla^{s} u_2|(|\nabla^{s} u_1|-|\nabla^{s} u_2|)-\nabla^{s} u_2\cdot\nabla^{s}(u_1-u_2) \right)dx\nonumber\\
    &\geq 0. \label{eqpo}
\end{align}
In other words, 
\begin{equation}\label{eq:lemma:lowesci:aux:1}
    \Phi(u_2)+\Phi'(u_2)[u_1-u_2]\leq \Phi(u_1).    
\end{equation}
Given now any $w\in H^s_0(\Omega)$ and a sequence $(w_n)_n\subset H_0^s(\Omega)$ with $w_n \rightharpoonup w$ weakly, choose $u_2=w$ and $u_1=w_n$ in~\eqref{eq:lemma:lowesci:aux:1}. Since $\lim_{n \to \infty} \Phi'(w)[w_n-w]=0$, we conclude the desired semi-continuity
\[
    \Phi(w) \leq \liminf\limits_{n \to \infty} \Phi(w_n). 
\]
\end{proof}

\subsection{The asymptotic sublinear growth case}
Recall that the assumption $f(t)=\nu g(t)$ implies that $F(t)=\nu G(t)$, in the definition of the energy~\eqref{eq:energy}.\smallskip 

\begin{lemma}\label{lemcoercive}
    Assume~\ref{item:g:1},\ref{item:g:2},\ref{item:gamma:1}, and that $\|h\|_{L^2(\Omega)}$ is sufficiently small. Then, $\E$ is coercive and bounded from below on $X\cap B$, for any ball $B$ in $H^s_0(\Omega)$.
\end{lemma}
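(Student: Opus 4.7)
My plan is to derive, on the entire cone $X$, a quadratic lower bound of the form
\[
    \E(u)\;\geq\; A\,\|u\|_{H^s_0(\Omega)}^2 \;-\; B\,\|u\|_{H^s_0(\Omega)},\qquad A>0,
\]
from which coercivity of $\E$ on $X$ (and in particular on $X\cap B$) and boundedness from below by $-B^2/(4A)$ both follow at once. The three terms of the energy~\eqref{eq:energy} will be treated separately.

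For the diffusive term, assumption~\ref{item:gamma:1} yields $\Gamma(t)\geq \gmin\, t$ for all $t\geq 0$, so
\[
    \int_{\Omega}\Gam{\frac{|\nabla^{s} u|^2}{2}}dx\;\geq\;\frac{\gmin}{2}\int_{\Omega}|\nabla^{s} u|^2\,dx \;\geq\; c_0\,\gmin\,\|u\|_{H^s_0(\Omega)}^2,
\]
where the last step is the norm equivalence~\eqref{eq:intro:aux:equivalence} between $X^{s,2}_0(\R^d)$ and $H^s(\R^d)$. For the reaction term, hypothesis~\ref{item:g:1} together with continuity of $g$ provides, for any fixed $\epsilon>0$, a constant $M_\epsilon>0$ such that $|g(t)|\leq M_\epsilon+\epsilon |t|$ for every $t\in\R$; integrating gives $|G(t)|\leq M_\epsilon|t|+\frac{\epsilon}{2}t^2$. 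Applying the continuous embedding $H^s_0(\Omega)\hookrightarrow L^2(\Omega)$ (fractional Poincar\'e inequality associated with $\poincConst$),
\[
    \nu\left|\int_{\Omega}G(u)\,dx\right|\;\leq\; \nu M_\epsilon\,C_1\,\|u\|_{H^s_0(\Omega)}\;+\;\nu\epsilon\,C_2\,\|u\|_{H^s_0(\Omega)}^2,
\]
and by Cauchy--Schwarz $\big|\int_{\Omega} h u\,dx\big|\leq C_3\,\|h\|_{L^2(\Omega)}\,\|u\|_{H^s_0(\Omega)}$.

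Combining these three estimates produces
\[
    \E(u)\;\geq\;\bigl(c_0\gmin-\nu\epsilon C_2\bigr)\|u\|_{H^s_0(\Omega)}^2 \;-\; \bigl(\nu M_\epsilon C_1+C_3\|h\|_{L^2(\Omega)}\bigr)\|u\|_{H^s_0(\Omega)}.
\]
Choosing $\epsilon$ small enough (depending on $\nu$) so that the coefficient $c_0\gmin-\nu\epsilon C_2$ is strictly positive yields the desired quadratic-minus-linear inequality, giving coercivity and the uniform lower bound on $X$, and hence on $X\cap B$ for any ball $B$.

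I do not foresee a significant obstacle here; the only real calibration is fitting $\epsilon$ to the fixed $\nu$ in order to absorb the $t^2$-piece of $G$ into the diffusive term. I remark that neither assumption~\ref{item:g:2} nor the smallness of $\|h\|_{L^2(\Omega)}$ is actually needed for this lemma: both are standing assumptions of the subsection and will be used later to ensure that $\inf_{X\cap B}\E<0$, so that the minimizer produced by the Precup--Varga machinery is both nontrivial and interior to $B$.
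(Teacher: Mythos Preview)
Your argument is correct and follows essentially the same line as the paper's: use $\Gamma(t)\geq\gmin t$ from~\ref{item:gamma:1} for the diffusive term, exploit~\ref{item:g:1} to bound $G(t)$ by a small quadratic plus a lower-order remainder, and handle the $h$-term by Cauchy--Schwarz. The only cosmetic differences are that the paper writes the remainder as a constant (obtaining $\nu G(t)\leq \tfrac{\gmin}{4}\poincConst\,t^2+A$) rather than a linear term, and expresses the final lower bound in the $L^2$ norm instead of the $H^s_0$ norm; your observation that neither~\ref{item:g:2} nor the smallness of $\|h\|_{L^2(\Omega)}$ is actually used here is also accurate.
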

\begin{proof}
For $\nu$ fixed, there exists $A>0$ such that
\[
    \forall t\in\R: \quad \nu G(t) \leq \poincConst\frac{\gmin}{4}\,t^2 +A,
\]
 given that $G(t)=o(t^2)$ as $t\to \pm \infty$ (see~\ref{item:g:1}). Also, $\Gam{t}\geq \gmin t$, from~\ref{item:gamma:1}. Hence, for any $u\in X$,
\begin{align*}
  \E (u) 
  &\geq \frac{\gmin}{2}\int_{\Omega} |\nabla^{s} u|^2 dx -\poincConst\frac{\gmin}{4} \int_{\Omega} u^2 dx - A|\Omega| - \|h\|_{L^2(\Omega)}\|u\|_{L^2(\Omega)}  \\
  &=\frac{\gmin}{2}\|u\|^2_{H^s_0(\Omega)}-\poincConst\frac{\gmin}{4}\|u\|^2_{L^2(\Omega)} - A|\Omega| - \|h\|_{L^2(\Omega)}\|u\|^2_{L^2(\Omega)}  \\
  &\geq \poincConst\frac{\gmin}{4}\|u\|^2_{L^2(\Omega)} -\|h\|_{L^2(\Omega)}\|u\|_{L^2(\Omega)}- A|\Omega|,
\end{align*}
from which follows from the coercivity and boundedness from below of $\E$.
\end{proof}

\begin{lemma}\label{leminfv}
    Assume~\ref{item:g:2} and $h\equiv 0$. Then, there exists $\nu_1>0$ so that for all $\nu>\nu_1$ 
    \[
    \inf_{u\in X} \E(u) < 0.
    \]
\end{lemma}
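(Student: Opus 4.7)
The plan is to exhibit a single nonnegative test function $u_0 \in X$ and then choose $\nu_1$ so large that $\mathcal{J}(u_0) < 0$ for all $\nu > \nu_1$. Since $h \equiv 0$, the functional reduces to
\[
    \mathcal{J}(u) = \Phi(u) - \nu \int_{\Omega} G(u)\,dx,
\]
where $\Phi$ is as in~\eqref{eq:def:Phi}. Because $\Phi(u_0)$ is a fixed finite number independent of $\nu$, it suffices to produce $u_0 \in X$ with $\int_\Omega G(u_0)\,dx > 0$: then $\nu_1 := \Phi(u_0)/\int_\Omega G(u_0)\,dx$ works, and the infimum over $X$ is automatically strictly negative.

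The construction of $u_0$ is the main step. Using $(g2)$ and the continuity of $G$, we have $G(t_0) > 0$. For $\epsilon > 0$ pick a cut-off $\phi_\epsilon \in C_c^\infty(\Omega)$ with $0 \leq \phi_\epsilon \leq 1$ and $\phi_\epsilon \equiv 1$ on $\Omega_\epsilon := \{x\in\Omega:\operatorname{dist}(x,\partial\Omega) > \epsilon\}$, and set $u_\epsilon := t_0 \phi_\epsilon$. Since $u_\epsilon \in C_c^\infty(\Omega) \subset H_0^s(\Omega)$ and $u_\epsilon \geq 0$, we have $u_\epsilon \in X$. Splitting the integral,
\[
    \int_\Omega G(u_\epsilon)\,dx \;\geq\; G(t_0)\,|\Omega_\epsilon| \;-\; M\,|\Omega \setminus \Omega_\epsilon|, \qquad M := \max_{[0,t_0]} |G|,
\]
which tends to $G(t_0)|\Omega| > 0$ as $\epsilon \to 0^+$. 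Fix $\epsilon_0$ so that $I := \int_\Omega G(u_{\epsilon_0})\,dx > 0$ and set $u_0 := u_{\epsilon_0}$.

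Finally, we must check that $\Phi(u_0)$ is finite. By~\ref{item:gamma:1} we have $\Gamma(t) \leq \gamma_{\max} t$, so
\[
    \Phi(u_0) \leq \frac{\gamma_{\max}}{2}\,\|\nabla^s u_0\|_{L^2(\mathbb{R}^d;\mathbb{R}^d)}^2 < \infty,
\]
the finiteness coming from $u_0 \in H_0^s(\Omega)$ together with the equivalence~\eqref{eq:intro:aux:equivalence}. Setting $\nu_1 := \Phi(u_0)/I$, for every $\nu > \nu_1$ one gets $\mathcal{J}(u_0) = \Phi(u_0) - \nu I < 0$, and hence $\inf_{u\in X}\mathcal{J}(u) < 0$.

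The main obstacle is the fact that $G$ is only known to be positive at the single point $t_0$ and may well take negative values on $[0,t_0)$. The boundary-layer cut-off $\phi_\epsilon$ circumvents this: $u_0$ coincides with $t_0$ on a set of almost full measure in $\Omega$, so the positive bulk contribution $G(t_0)|\Omega_\epsilon|$ dominates any negative contribution produced in the thin annulus $\Omega \setminus \Omega_\epsilon$.
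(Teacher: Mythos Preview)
Your proof is correct and follows essentially the same strategy as the paper's: build a nonnegative test function in $X$ that equals $t_0$ on a set of nearly full measure in $\Omega$, bound the possibly negative contribution of $G$ on the thin remainder by $\max_{[0,t_0]}|G|$ times a small measure, and then push $\nu$ large using the finiteness of $\Phi(u_0)$ via~\ref{item:gamma:1}. The only cosmetic difference is that you produce the test function with a smooth cutoff $\phi_\epsilon\in C_c^\infty(\Omega)$, whereas the paper invokes Tietze's extension theorem on a compact $K\subset\subset\Omega$; your choice is arguably cleaner since membership in $H^s_0(\Omega)$ is immediate for $C_c^\infty$ functions.
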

\begin{proof}
Take $t_0>0$ as in~\ref{item:g:2}. By Tietze’s extension theorem, for any compact set \( K\subset\subset \Omega \) there is a map $w_K \in X\cap C(\overline{\Omega})$ such that $w_K\equiv t_0$ in $K$ and $|w_K| \leq t_0$ in $\Omega$. Whence,
\[
\int_{\Omega} G(w_K) \, dx \geq |K|\,G(t_0)-|\Omega\setminus K|(\max_{s \in [-t_0, t_0]} |G(s)|).
\]
Fixing now $K$ in such a way that $|\Omega\setminus K|\leq \min\{|\Omega|G(t_0)/(4\|G\|_{C([-t_0,t_0])}),|\Omega|/2\}$, we deduce
\[
\int_{\Omega} G(w_K) \, dx\geq \frac{|\Omega|}{4} G(t_0)> 0.
\]
This, combined with~\ref{item:gamma:1}, allows us to conclude that, for \( \nu \) sufficiently large, there holds:  
\[
    \E(w_K) \leq \gmax\int_{\Omega}|\nabla^s w_K|^2 \, dx - \nu \int_{\Omega} G(w_K) \, dx < 0.
\]
\end{proof}

\begin{lemma}\label{nosol}
    Assume~\ref{item:g:3},\ref{item:gamma:1}, and $h\equiv 0$. Then, for $\nu >0$ sufficiently small, Problem~\eqref{prob} does not admit nontrivial solutions. 
\end{lemma}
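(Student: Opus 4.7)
The plan is to establish nonexistence by a direct testing argument in the weak formulation. Suppose, for contradiction, that $u\in X$ is a nontrivial weak solution of~\eqref{prob} with $h\equiv 0$. I would use $\varphi=u\in H_0^s(\Omega)$ as a test function in the identity $\E'(u)[\varphi]=0$ coming from~\eqref{eq:energy:derivative}, which yields
\begin{equation*}
    \int_{\Omega}\gam{\frac{|\nabla^s u|^2}{2}}|\nabla^s u|^2\,dx=\nu\int_{\Omega}g(u)\,u\,dx.
\end{equation*}

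The right-hand side is controlled from above via hypothesis~\ref{item:g:3}, which furnishes a constant $C>0$ (independent of $\nu$) such that $|g(t)|\leq C|t|$ for all $t\in\R$. Hence the right-hand side is at most $\nu C\|u\|_{L^2(\Omega)}^2$. The left-hand side is controlled from below using~\ref{item:gamma:1} by $\gmin\int_{\Omega}|\nabla^s u|^2\,dx$. I then invoke the Poincar\'e inequality associated with the first eigenvalue of $(-\Delta)^s$ on $\Omega$; using the duality between $\nabla^s$ and $-\divs{\cdot}$ recalled in the introduction (so that $\int_\Omega u\,(-\Delta)^s u\,dx=\int_\Omega |\nabla^s u|^2\,dx$), this reads
\begin{equation*}
    \poincConst\int_{\Omega}u^2\,dx\leq \int_{\Omega}|\nabla^s u|^2\,dx.
\end{equation*}
Chaining these three estimates produces the inequality $\gmin\,\poincConst\,\|u\|_{L^2(\Omega)}^2\leq \nu C\|u\|_{L^2(\Omega)}^2$.

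Setting $\nu_0:=\gmin\poincConst/C$, whenever $0<\nu<\nu_0$ the inequality above forces $\|u\|_{L^2(\Omega)}=0$, and hence $u\equiv 0$, contradicting nontriviality. There is no real obstacle here beyond verifying that the duality $-\divs{\nabla^s u}=(-\Delta)^s u$ is valid at the level of $H_0^s(\Omega)$-regular functions so that the Poincar\'e inequality applies with the constant $\poincConst$ from the statement; this is a direct consequence of the identity $\int_{\R^d}u\,\divs{\varphi}\,dx=-\int_{\R^d}\inn{\varphi}{\nabla^s u}\,dx$ already recalled in the introduction, combined with the norm equivalence~\eqref{eq:intro:aux:equivalence}.
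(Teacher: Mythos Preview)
Your proposal is correct and follows essentially the same approach as the paper: test the weak formulation with $u$, bound the left-hand side from below via~\ref{item:gamma:1}, bound the right-hand side from above via~\ref{item:g:3}, and close with the Poincar\'e inequality for $(-\Delta)^s$ to force $u\equiv 0$ when $\nu$ is small. The only cosmetic difference is that the paper phrases the final inequality in terms of $\|u\|_{H^s_0(\Omega)}$ while you phrase it in terms of $\|u\|_{L^2(\Omega)}$, but these are equivalent via the same Poincar\'e estimate.
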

\begin{proof}
Let \( \nu > 0 \) be such that Problem~\eqref{prob} has a solution $u\in X$. Testing the equation with $u$, and using~\ref{item:gamma:1}, we obtain
\[
    \frac{\gmin}{2}\int_{\Omega} |\nabla^{s} u|^2 dx\leq \int_{\Omega} \gam{\frac{|\nabla^{s} u|^2}{2}}|\nabla^{s} u|^2 dx=\nu \int_{\Omega} g(u)u \, dx \leq \nu \int_{\Omega} |g(u)||u|\, dx. 
\]
Now, in view of~\ref{item:g:3}, there exists $C>0$ so that 
\[
    \int_{\Omega} |g(u)||u|\, dx\leq C\int_{\Omega} |u|^2\, dx\leq C(\poincConst)^{-1}\|u\|_{H^s_0(\Omega)}^2.
\]
Putting together the two inequalities above, we deduce
\[
    \left(\frac{\gmin}{2} -\nu \frac{C}{\poincConst}\right) \|u\|_{H^s_0(\Omega)}^2 \leq 0.
\]
We conclude that $u\equiv 0$ is the only possible solution, provided $\nu>0$ small enough. 
\end{proof}

\subsection{Proof of Theorem~\ref{thm:2}}
    For part (i), observe that \(\E \) is bounded,  coercive, and weakly sequentially lower semicontinuous by Lemmas \ref{lemwsl} and \ref{lemcoercive}. We will establish the boundedness condition of $\E$ by contradiction. Suppose that there exists some $u\in X$ with $\|u\|_{H^s_0(\Omega)}=R$ such that $\E'+\rho\E=0$ for some $\rho>0$. Then,
    \[
        \int_{\Omega}\gam{\frac{|\nabla^su|^2}{2}}|\nabla^su|^2dx-\int_{\Omega}\nu g(u)u\,dx-\int_{\Omega}hu\,dx+\rho\int_{\Omega}\gam{\frac{|\nabla^su|^2}{2}}|\nabla^su|^2\,dx=0.
    \]
    From here, we see that
    \[
        (1+\rho)\gmin\|u\|^2_{H^s_0(\Omega)}\leq \nu\int_{\Omega}g(u)u\,dx+\int_{\Omega}hu\,dx
    \]
    Whence, H\"older's inequality together with \ref{item:g:3}~for $R\geq t_0$ imply
    \[
        R^2<(1+\rho)\gmin R^2\leq CR^2+\|h\|^2_{L^2(\Omega)}R,
    \]
    thus contradicting~\eqref{condbound}. Thus, by \cite[Theorem 2.10]{radu}, it is guaranteed that $\E$ admits a global minimum. Furthermore, $u$ is nontrivial since $h\neq 0$.
    
    For part (ii.a), the boundedness condition is also satisfied when $h\equiv 0$. Moreover, from~Lemmas~\ref{lemwsl} and~\ref{lemcoercive}, together with \cite[Theorem 2.10]{radu}, yield that the functional $\E$ attains a global minimum $u$. In fact, $u$ is nontrivial for $\nu >0$ large enough, in light of~Lemma~\ref{leminfv}. Finally, (ii.b) follows directly by Lemma~\ref{nosol}. This concludes the proof of Theorem~\ref{thm:2}.\qed

\subsection{The linear growth case}
\begin{lemma}\label{lc}
  Assume conditions~\ref{item:f:1}-\ref{item:f:2} and~\ref{item:gamma:1}-\ref{item:gamma:2}. For every $h\in L^2(\Omega)$ with \( \|h\|_{L^2(\Omega)} \) sufficiently small, there exists \( r_h > 0 \) such that 
  \[
    \inf\{\E(u):\|u\|_{H_0^s(\Omega)} = r_h \}\geq \alpha(r_h)>0.
  \]  
\end{lemma}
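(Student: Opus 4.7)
The plan is to exhibit a standard ``mountain-pass-type'' geometry near the origin for $\E$, combining the strict subcriticality at zero encoded in \ref{item:f:1} with the subcritical growth at infinity in \ref{item:f:2}, and then treating the linear term in $h$ as a small perturbation.

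First, I would derive a pointwise bound on $F$. By \ref{item:f:1} there exist $\delta>0$ and $a<\gmin\poincConst$ such that $F(t)\leq\frac{a}{2}t^2$ whenever $|t|\leq\delta$. By \ref{item:f:2}, for any $\varepsilon>0$ there is $C_\varepsilon>0$ with $F(t)\leq\varepsilon|t|^{p+1}+C|t|^{p+1}$ for $|t|\geq\delta$ (after absorbing the bounded range into a $|t|^{p+1}$ term). Combining, there is $C>0$ such that
\begin{equation*}
    F(t)\leq \tfrac{a}{2}\,t^{2}+C\,|t|^{p+1},\qquad\forall t\in\R.
\end{equation*}

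Next I would use the lower bound $\Gamma(\tau)\geq\gmin\tau$ coming from \ref{item:gamma:1}, the Poincaré inequality $\poincConst\|u\|_{L^2(\Omega)}^2\leq\|u\|_{H^s_0(\Omega)}^2$, and the fractional Sobolev embedding $H_0^s(\Omega)\hookrightarrow L^{p+1}(\Omega)$ (valid since $p+1<2^*_s$ by \ref{item:f:2}) with embedding constant $C_{\mathrm{Sob}}$. Using also Cauchy--Schwarz on the $h$-term,
\begin{align*}
    \E(u) &\geq \tfrac{\gmin}{2}\|u\|^2_{H^s_0(\Omega)} -\tfrac{a}{2}\|u\|^2_{L^2(\Omega)} - C\|u\|^{p+1}_{L^{p+1}(\Omega)} - \|h\|_{L^2(\Omega)}\|u\|_{L^2(\Omega)}\\
    &\geq \tfrac{1}{2}\!\left(\gmin-\tfrac{a}{\poincConst}\right)\|u\|^2_{H^s_0(\Omega)} - C\,C_{\mathrm{Sob}}^{p+1}\|u\|^{p+1}_{H^s_0(\Omega)} - \tfrac{\|h\|_{L^2(\Omega)}}{\sqrt{\poincConst}}\|u\|_{H^s_0(\Omega)}.
\end{align*}
Setting $\mu:=\gmin-a/\poincConst>0$ and writing $r=\|u\|_{H^s_0(\Omega)}$, the right-hand side equals $r\bigl[\tfrac{\mu}{2}r-\widetilde{C}\,r^{p}-\tfrac{1}{\sqrt{\poincConst}}\|h\|_{L^2(\Omega)}\bigr]$.

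The scalar function $\psi(r):=\tfrac{\mu}{2}r-\widetilde{C}r^{p}$ attains a strictly positive maximum at some $r^{*}>0$ because $p>1$. Choose $r_h$ close to $r^*$; then for every $h$ with $\|h\|_{L^2(\Omega)}<\sqrt{\poincConst}\,\psi(r^*)$, we obtain $\E(u)\geq r_h\bigl[\psi(r_h)-\tfrac{1}{\sqrt{\poincConst}}\|h\|_{L^2(\Omega)}\bigr]=:\alpha(r_h)>0$ uniformly for all $u$ with $\|u\|_{H^s_0(\Omega)}=r_h$, yielding the claim.

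The main technical point I expect to have to be careful about is the identification of norms: the functional is written in terms of $\nabla^{s}u$, while the Poincaré constant $\poincConst$ is defined for $(-\Delta)^s$, and the ambient space is $H^s_0(\Omega)$. I would rely on the equivalence in~\eqref{eq:intro:aux:equivalence} together with the duality relation $-\operatorname{div}^s\nabla^s=(-\Delta)^s$ to justify that $\|\nabla^s u\|^2_{L^2}=\langle(-\Delta)^s u,u\rangle=\|u\|^2_{H^s_0(\Omega)}$ (with a fixed normalization), so that the Poincaré step is clean; all other parts are routine estimates once this is fixed. Condition~\ref{item:gamma:2} is not needed here, since only a lower bound on $\Gamma$ is invoked.
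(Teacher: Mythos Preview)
Your proposal is correct and follows essentially the same route as the paper: both derive a pointwise bound $F(t)\le \tfrac{a}{2}t^{2}+C|t|^{p+1}$ from \ref{item:f:1}--\ref{item:f:2}, use $\Gamma(\tau)\ge\gmin\tau$, Poincar\'e, and the Sobolev embedding to reduce to a one-variable inequality in $r=\|u\|_{H^s_0(\Omega)}$, and then optimize. Your observation that \ref{item:gamma:2} is not actually used here is accurate, and your factorization $r\,\psi(r)$ is a slightly cleaner way to organize the final optimization than the paper's version.
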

\begin{proof}
We begin by observing that, in light of~\ref{item:f:1}-\ref{item:f:2}, there exists $C_0>1$ large enough, such that
\[
    |f(t)|\leq \frac{\gmin}{2}\poincConst\,|t|+C_0|t|^p,\quad\text{ for all }t\in\R.
\]
This, in turn, implies
\[
    |F(t)|\leq \frac{\gmin}{4}\poincConst\,t^2+\frac{C_0}{p+1}|t|^p,\quad\text{ in }\R.
\]
Now, if $u\in X$, from the definition of the energy~\eqref{eq:energy} and~\ref{item:gamma:1}, there follows that
\[
\begin{aligned}
    \E(u)
    &\geq \frac{\gmin}{2}\int_{\Omega}|\nabla^s u|^2dx-\frac{\gmin}{4}\poincConst\int_{\Omega}u^2dx-\frac{C_0}{p+1}\int_{\Omega}|u|^{p+1}dx-\int_{\Omega}hu\,dx \\[0.25em]
    &\geq \frac{\gmin}{2}\|u\|^2_{H^s_0(\Omega)}-\frac{\gmin}{4}\poincConst\|u\|^2_{L^2(\Omega)}-\frac{C_0}{p+1}\|u\|^{p+1}_{L^{p+1}(\Omega)}-\|h\|_{L^2(\Omega)}\|u\|_{L^2(\Omega)}\\[0.25em]
    &\geq \frac{\gmin}{4}\|u\|^2_{H^s_0(\Omega)}-\frac{C_0}{p+1}\,C^{p+1}_s(\Omega)\|u\|^{p+1}_{H^s_0(\Omega)}-\sqrt{\poincConst}\|h\|_{L^2(\Omega)}\|u\|_{H^s_0(\Omega)},
\end{aligned}
\]
where we have used the variational characterization of the first eigenvalue $\poincConst$ of $(-\Delta)^s$ on $\Omega$, as well as the fractional Sobolev embedding $H^s_0(\Omega)\hookrightarrow L^{p+1}(\Omega)$ with Sobolev constant $C_s(\Omega)$, using $p+1\in (2,2^*_s)$. In other words, we obtained that
\[
    \E(u)\geq \bigl(\frac{\gmin}{4}-a\|u\|^{p-1}_{H^s_0(\Omega)}-\frac{b}{\|u\|_{L^2(\Omega)}}\bigr) \|u\|^2_{H^s_0(\Omega)},
\]
for the constants $a:=C_0C^p_s(\Omega)/(p+1)$ and $b:=\sqrt{\poincConst}\|h\|_{L^2(\Omega)}$. Let us  denote $r:=\|u\|_{H^s_0(\Omega)}>0$, and observe that 
$r\mapsto ar^{p-1}+b/r=:q(r)$ is smooth for $r>0$, strictly convex as $p>2$, $\lim_{r\to0^+}q(r)=+\infty$ and $\lim_{r\to\infty}q(r)=+\infty$. Its only global minimum is attained at $r_{\star}=(b/(a(p-1)))^{1/p}$ with $q(r_{\star})=O(b^{p-1})$. Therefore, taking $\|h\|_{L^2(\Omega)}=O(b)$ small enough to make $q(r_{\star})<\gmin/4$, we conclude the desired inequality $\inf\{\E(u):\|u\|_{H^s_0(\Omega)}=r\}\geq \alpha(r)>0$, with $\alpha(r)=\gamma/4-q(r_{\star})$.
\end{proof}

\begin{proposition}\label{lem1sol}
Assume~\ref{item:f:1}-\ref{item:f:2} and~\ref{item:gamma:1}-\ref{item:gamma:2}, and that $\|h\|_{L^2(\Omega)}$ is sufficiently small. Then, Problem~\eqref{prob} admits a solution which is a local minimizer of $\E$. In addition, this solution is nontrivial, provided $h\neq 0$. 
\end{proposition}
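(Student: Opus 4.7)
The plan is to realize the sought solution as a local minimum of $\E$ inside $X\cap\overline{B_{r_h}}$, where $\overline{B_{r_h}}:=\{u\in H^s_0(\Omega):\|u\|_{H^s_0(\Omega)}\leq r_h\}$ and $r_h$ is the radius provided by Lemma~\ref{lc}. First I would observe that $X\cap\overline{B_{r_h}}$ is weakly compact: $X$ is convex and strongly closed in $H^s_0(\Omega)$, hence weakly closed by Mazur's theorem, while $\overline{B_{r_h}}$ is weakly compact. Combined with the weak sequential lower semi-continuity of $\E$ furnished by Lemma~\ref{lemwsl}, this yields some $u_0\in X\cap\overline{B_{r_h}}$ attaining $m_h:=\inf\{\E(u):u\in X\cap\overline{B_{r_h}}\}$.

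Next I would verify that $u_0$ lies in the open ball $B_{r_h}$, so it is a genuine local minimum of $\E$ on the cone rather than a constrained minimum on the sphere. By Lemma~\ref{lc}, $\E\geq \alpha(r_h)>0$ on $\partial B_{r_h}\cap X$, so it suffices to exhibit a test element of $X\cap B_{r_h}$ on which $\E$ is strictly below $\alpha(r_h)$. When $h\equiv 0$ the choice $u=0$ works, since $\E(0)=0$. When $h\not\equiv 0$, pick any $\varphi\in X\setminus\{0\}$ supported where $h>0$, so that $\int_\Omega h\varphi\,dx>0$; then, using~\ref{item:gamma:1} and the pointwise growth bound for $F$ established in the proof of Lemma~\ref{lc}, one sees $\E(t\varphi)=O(t^2)-t\int_\Omega h\varphi\,dx<0$ for $t>0$ sufficiently small. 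In either case $m_h<\alpha(r_h)$, which forces $\|u_0\|_{H^s_0(\Omega)}<r_h$; moreover $m_h<0=\E(0)$ when $h\not\equiv 0$, which yields the nontriviality claim in that case.

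Finally, to promote the interior minimum $u_0$ on $X$ into an actual weak solution of~\eqref{prob}, I would invoke~\cite[Theorem 2.10]{radu} through the auxiliary operators $J$ and $\bar J=J^{-1}$ introduced just before Section~\ref{sec2}. The structural hypotheses of the Precup--Varga framework are supplied by the uniform ellipticity~\ref{item:gamma:1} together with the strict monotonicity of the gradient of $\Phi$ already exploited in~\eqref{eqpo} (a consequence of~\ref{item:gamma:2}), and this yields $\E'(u_0)=0$ in $H^{-s}(\Omega)$, i.e., $u_0$ weakly solves~\eqref{prob}.

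The main obstacle in this plan is precisely this last identification step: the left-hand side of~\eqref{prob} is quasilinear in $\nabla^s u$, so~\cite[Theorem 2.10]{radu} cannot be applied verbatim, and one must pass through the equivalence between the natural form on $H^s_0(\Omega)$ and the nonlinear form $J$. Working inside the cone $X$ is essential here: it prevents the nonlocal obstruction exhibited in the introduction---namely, that $\int \inn{\nabla^s u_+}{\nabla^s u_-}\,dx$ can be negative---from undermining the positivity of $u_0$.
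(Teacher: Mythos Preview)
Your argument is correct and matches the paper's approach: minimize $\E$ on $X\cap\overline{B_{r_h}}$ via the direct method together with Lemma~\ref{lemwsl}, then invoke Lemma~\ref{lc} to force the minimizer into the open ball. The paper's version is slightly terser---it uses $m_h\leq\E(0)=0<\alpha(r_h)$ uniformly in $h$ (so your case split is not needed for interiority) and simply asserts criticality and nontriviality without spelling out the test-function and Precup--Varga details you supply.
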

\begin{proof}
Take both $h\in L^2(\Omega)$ and $r_h>0$ as in Lemma~\ref{lc}. The problem \( m := \inf\{\E(u): \|u\|\leq r_h\} \) admits a minimizer $u_{\star}$ in view of Lemma~\ref{lemwsl}. Indeed, since $\overline{B(0,r_h)}\in X$ is bounded and weakly closed in $H^s_0(\Omega)$, we can use the direct method of the calculus of variations to obtain a cluster point $u_{\star}\in\overline{B(0,r_h)}\cap X$ from any minimizing sequence, say \((u_n)_{n \in \mathbb{N}} \subset \overline{B(0,r_h)}\cap X\). As the energy is weakly l.s.c., it follows that \( m=\E(u_{\star})\leq\liminf_{n\to\infty}\E(u_n)=m\). Furthermore, the minimizer $u_{\star}$ lies in the interior of $B(0,r_h)\cap X$, due to the fact that
$m\leq \E(0) = 0 < \alpha(r_h)$. Thus, \( u_{\star} \) is a critical point of \(\E\), rendering a solution to~\eqref{prob}. Moreover, if \( h \ne 0 \), then \( u_{\star} \) is nontrivial.
\end{proof}

\begin{proposition}\label{lem1sol}
Assume that~\ref{item:f:1}-\ref{item:f:2} and~\ref{item:gamma:1}-\ref{item:gamma:2} are satisfied, and that $\|h\|_{L^2(\Omega)}$ is sufficiently small. Then Problem~\eqref{prob} admits a solution which is a local minimizer of $\E$. In addition, this solution is nontrivial if $h\ne 0$. 
\end{proposition}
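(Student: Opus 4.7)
The plan is to apply the direct method of the calculus of variations on the closed ball $\overline{B(0,r_h)}\cap X$ inside $H^s_0(\Omega)$, where $r_h$ is the radius produced by Lemma~\ref{lc}. Set $m := \inf\{\E(u): u\in\overline{B(0,r_h)}\cap X\}$ and pick a minimizing sequence $(u_n)_n \subset \overline{B(0,r_h)}\cap X$. Since $\overline{B(0,r_h)}\cap X$ is bounded, convex, and norm-closed in the reflexive Hilbert space $H^s_0(\Omega)$, it is weakly sequentially closed, so a subsequence converges weakly to some $u_\star \in \overline{B(0,r_h)}\cap X$. The weak sequential lower semicontinuity of $\E$ from Lemma~\ref{lemwsl} then yields $\E(u_\star)\leq \liminf_{n\to\infty}\E(u_n) = m$, so $u_\star$ is a minimizer of $\E$ on $\overline{B(0,r_h)}\cap X$.

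Next I would verify that $u_\star$ is strictly interior to the ball. Since $0\in X$ and $\E(0)=0$, we have $m \leq 0$, while Lemma~\ref{lc} asserts $\E(u) \geq \alpha(r_h) > 0$ whenever $\|u\|_{H^s_0(\Omega)} = r_h$. Therefore $\|u_\star\|_{H^s_0(\Omega)} < r_h$, which effectively removes the ball constraint at $u_\star$ and reduces the problem to a variational problem on the cone $X$.

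The third and most delicate step is to conclude that $u_\star$ is a critical point of $\E$, hence a weak solution of Problem~\eqref{prob}. Because the cone $X$ has empty interior in $H^s_0(\Omega)$, a minimizer on $X$ only satisfies a variational inequality a priori. To upgrade this to the Euler--Lagrange equation, I would invoke the Precup--Varga framework already used for Theorem~\ref{thm:2} (see \cite{radu}), which combined with the non-negativity of $h$ and the structural properties of $\gamma$ from \ref{item:gamma:1}--\ref{item:gamma:2} guarantees that the constrained minimum on $X$ is in fact an unconstrained critical point. Nontriviality when $h\neq 0$ is then immediate: choosing any $\varphi \in X$ with $\int_\Omega h\varphi\,dx > 0$, an expansion of $\E(t\varphi)$ for small $t>0$ yields $\E(t\varphi) < 0$, so $m < 0 = \E(0)$ and thus $u_\star \neq 0$.

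The main obstacle is precisely this last step: the empty-interior character of the cone $X$ obstructs the direct passage from constrained minimizer to unconstrained critical point, and one must carefully translate the first-order variational inequality on $X$ into the equation~\eqref{prob}. This is the technical point where the specialized Precup--Varga machinery is essential, just as in the proof of Theorem~\ref{thm:2}.
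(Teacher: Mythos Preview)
Your proposal is correct and follows essentially the same route as the paper: direct method on $\overline{B(0,r_h)}\cap X$ using Lemma~\ref{lemwsl}, then Lemma~\ref{lc} to rule out the sphere and place $u_\star$ strictly inside the ball, then conclude criticality and nontriviality. If anything, you are more careful than the paper's own proof, which simply asserts ``Thus, $u_\star$ is a critical point of $\E$'' without addressing the empty-interior issue of the cone $X$ that you correctly flag; the Precup--Varga machinery from~\cite{radu} is indeed what the paper relies on elsewhere (in the proofs of Theorems~\ref{thm:1} and~\ref{thm:2}) to handle exactly this point, and your expansion argument for nontriviality when $h\neq 0$ is likewise more explicit than the paper's one-line claim.
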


\begin{proof}
Take both $h\in L^2(\Omega)$ and $r_h>0$ as in Lemma~\ref{lc}. The problem \( m := \inf\{\E(u): \|u\|\leq r_h\} \) admits a minimizer $u_{\star}$ in view of Lemma~\ref{lemwsl}. Indeed, since $\overline{B(0,r_h)}\cap X$ is bounded and weakly closed in $H^s_0(\Omega)$, we can use the direct method of the calculus of variations to obtain a cluster point $u_{\star}\in\overline{B(0,r_h)}\cap X$ from any minimizing sequence, say \((u_n)_{n \in \mathbb{N}} \subset \overline{B(0,r_h)}\cap X\). As the energy is weakly l.s.c., it follows that \( m=\E(u_{\star})\leq\liminf_{n\to\infty}\E(u_n)=m\). Furthermore, the minimizer $u_{\star}$ lies in the interior of $B(0,r_h)\cap X$, due to the fact that
$m\leq \E(0) = 0 < \alpha(r_h)$. Thus, \( u_{\star} \) is a critical point of \(\E\), rendering a solution to~\eqref{prob}. Moreover, if \( h \ne 0 \), then \( u_{\star} \) is nontrivial.
\end{proof}

\section{A mountain pass solution for Problem~\eqref{prob}}
In this section, we prove the existence of a solution to the boundary value problem~\eqref{prob} using the mountain pass theorem. This entails arguing that the energy $\E$ satisfies the geometric conditions required by the mountain pass theorem, which will be a consequence of  Lemma~\ref{lc} together with the auxiliary result.\smallskip

\begin{lemma}\label{cond2}
    Assume~\ref{item:f:2}-\ref{item:f:3}, and let $\varphi_1$ be the first eigenfunction of $(-\Delta)^s$ in $H^s_0(\Omega)$. Then $\E (t\varphi_1)<0$ for all sufficiently large  $t\in \R$.
\end{lemma}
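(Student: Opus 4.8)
The plan is to test the energy $\E$ along the half-line $\{t\varphi_1 : t>0\}$ and show $\E(t\varphi_1)\to-\infty$ as $t\to\infty$, which is stronger than what is claimed. The choice of $\varphi_1$ is essential: since $\varphi_1$ realizes the Rayleigh quotient, the quadratic part of the reaction term dictated by~\ref{item:f:3} is exactly calibrated against the $s$-diffusion along this direction (for a generic positive test function the diffusion would strictly dominate and the argument would fail). Because $\varphi_1\ge 0$ and $h\ge 0$, the term $-t\int_\Omega h\varphi_1\,dx$ in $\E(t\varphi_1)$ is nonpositive and may simply be discarded; it therefore suffices to control $\int_\Omega\Gam{\tfrac{t^2|\nabla^s\varphi_1|^2}{2}}dx-\int_\Omega F(t\varphi_1)\,dx$.

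First I would pin down the asymptotics of the diffusion term. By~\ref{item:gamma:1} one has $\gmin\le\Gamma(\tau)/\tau\le\gmax$ for all $\tau>0$, and $\Gamma(\tau)/\tau\to\ginf$ as $\tau\to\infty$ (a running average of $\gamma$ inherits its limit). Writing $w:=|\nabla^s\varphi_1|^2/2\in L^1(\Omega)$ and $\tfrac1{t^2}\Gam{t^2 w(x)}=w(x)\,\tfrac{\Gamma(t^2 w(x))}{t^2 w(x)}$, dominated convergence with dominating function $(\gmax-\gmin)\,w$ yields
\[
  \int_\Omega\Gam{\tfrac{t^2|\nabla^s\varphi_1|^2}{2}}dx=\frac{\ginf}{2}\,\|\varphi_1\|^2_{H^s_0(\Omega)}\,t^2+o(t^2),\qquad t\to\infty .
\]
Since $\varphi_1$ is the first eigenfunction of $(-\Delta)^s$, one has $\|\varphi_1\|^2_{H^s_0(\Omega)}=\poincConst\,\|\varphi_1\|^2_{L^2(\Omega)}$, so the diffusion term equals $\tfrac{\ginf\poincConst}{2}\,\|\varphi_1\|^2_{L^2(\Omega)}\,t^2+o(t^2)$.

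Next I would bound the reaction term from below using~\ref{item:f:3}: for every $\varepsilon>0$ there is $T_\varepsilon>0$ with $f(\tau)\ge(\ginf\poincConst-\varepsilon)\tau$ for $\tau\ge T_\varepsilon$; integrating this and absorbing the bounded contribution on $[0,T_\varepsilon]$ into a constant gives $F(\tau)\ge\tfrac12(\ginf\poincConst-\varepsilon)\tau^2-C_\varepsilon$ for all $\tau\ge 0$, hence $\int_\Omega F(t\varphi_1)\,dx\ge\tfrac12(\ginf\poincConst-\varepsilon)\|\varphi_1\|^2_{L^2(\Omega)}t^2-C_\varepsilon|\Omega|$. (Condition~\ref{item:f:2} enters only to ensure $F(t\varphi_1)\in L^1(\Omega)$ via $\varphi_1\in L^{p+1}(\Omega)$.) Combining with the previous paragraph,
\[
  \E(t\varphi_1)\ \le\ \tfrac{\varepsilon}{2}\,\|\varphi_1\|^2_{L^2(\Omega)}\,t^2+o(t^2)+C_\varepsilon|\Omega| .
\]

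The main obstacle is visible here: the leading $t^2$–coefficients of diffusion and reaction coincide and cancel, so this crude combination still carries a nonnegative remainder $\tfrac{\varepsilon}{2}\|\varphi_1\|^2_{L^2(\Omega)}t^2$, and one must squeeze a strictly negative contribution out of the subleading terms. I see two ways to close it. Either~\ref{item:f:3} is used with a genuine strict excess, $\liminf_{t\to\infty}f(t)/t>\ginf\poincConst$, in which case the reaction bound improves to $F(\tau)\ge\tfrac12(\ginf\poincConst+\delta)\tau^2-C_\delta$ for some fixed $\delta>0$ and the combination already gives $\E(t\varphi_1)\le-\tfrac{\delta}{2}\|\varphi_1\|^2_{L^2(\Omega)}t^2+o(t^2)+C_\delta|\Omega|\to-\infty$; or, reading the diffusion integral in $\E$ literally over $\Omega$, one exploits that the nonlocal gradient leaks out of $\Omega$: from $-\divs{\nabla^s\varphi_1}=(-\Delta)^s\varphi_1$ together with the duality $\int_{\R^d}u\,\divs{\psi}\,dx=-\int_{\R^d}\inn{\psi}{\nabla^s u}\,dx$ one obtains $\int_{\R^d}|\nabla^s\varphi_1|^2\,dx=\poincConst\|\varphi_1\|^2_{L^2(\Omega)}$, while $\int_{\R^d\setminus\Omega}|\nabla^s\varphi_1|^2\,dx>0$ (for $x$ far from $\Omega$ the kernel in $\nabla^s\varphi_1(x)$ does not cancel, so $\nabla^s\varphi_1\not\equiv0$ outside $\overline\Omega$); then the diffusion term is $\tfrac{\ginf}{2}\bigl(\poincConst\|\varphi_1\|^2_{L^2(\Omega)}-c_1\bigr)t^2+o(t^2)$ with $c_1>0$, and choosing $\varepsilon<\ginf c_1/\|\varphi_1\|^2_{L^2(\Omega)}$ forces $\E(t\varphi_1)\to-\infty$. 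Either route yields $\E(t\varphi_1)<0$ for all sufficiently large $t$.
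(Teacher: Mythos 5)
Your main line is the paper's proof: the authors compute $\lim_{t\to\infty}t^{-2}\int_\Omega\Gam{\tfrac{t^2}{2}|\nabla^s\varphi_1|^2}dx=\tfrac{\ginf}{2}\poincConst\|\varphi_1\|^2_{L^2(\Omega)}$ by exactly your dominated-convergence argument, bound the reaction term below by a quadratic, and drop the nonpositive $-t\int_\Omega h\varphi_1$. Where you diverge is in the honest accounting of the leading coefficients, and you have put your finger on a real soft spot: the paper's proof asserts that~\ref{item:f:3} yields a $\delta>0$ with $F(t)\geq\tfrac{\ginf}{2}(\poincConst+\delta)t^2$ for $t\geq1$, i.e.\ it silently reads the $\liminf$ condition as a \emph{strict} inequality; your ``route (a)'' is precisely this reading, and with it the argument closes exactly as in the paper (indeed giving $\E(t\varphi_1)\to-\infty$). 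With the non-strict~\ref{item:f:3} as literally stated, your $\varepsilon$-computation is correct that the $t^2$ terms only cancel and the lemma does not follow, so flagging the needed strengthening is the right call. Your ``route (b)'' I would not rely on: it is inconsistent with the paper's own convention, since the proof of Lemma~\ref{cond2} (and of Lemma~\ref{lc}) uses $\int_\Omega|\nabla^s\varphi_1|^2dx=\poincConst\int_\Omega\varphi_1^2dx$ with no leakage term, so the constant $c_1>0$ you want to harvest from $\R^d\setminus\Omega$ is not available in the framework the energy is actually defined in, and the claim $\nabla^s\varphi_1\not\equiv0$ outside $\overline\Omega$ would itself require proof. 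One further nit: your dominating function should be $\gmax\,w$ (or $(\gmax-\gmin)w$ for the difference with the limit), but this does not affect the argument.
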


\begin{proof}
Let us obtain a bound for the first term in the energy $\E$. In view of~\ref{item:gamma:1}, for all $n\in \N$, $|\gamma(\tau)-\gamma(\infty)| < 1/n$, provided $\tau\geq \tau_n>>1$. Whence, for  $x\in\Omega$,
\[
\begin{aligned}
    \lim\limits_{t \to \infty}\frac{1}{t^2}\Gamma\left(\dfrac{t^2}{2}|\nabla^s \varphi_1(x)|^2\right)
    &= \lim\limits_{t\to\infty}\dfrac{1}{t^2}\left( \int\limits_{0}^{t^2|\nabla^s \varphi_1(x)|^2/2}(\gamma(\tau)-\gamma(\infty))d\tau+\int\limits_{0}^{t^2|\nabla^s \varphi_1(x)|^2/2} \gamma(\infty)d\tau\right)\\
    &= \dfrac{\gamma(\infty)}{2} |\nabla^s \varphi_1(x)|^2.
\end{aligned}
\]
On the other hand, the boundedness of $\gamma$ implies,
\[
    \forall t\geq 1: \quad\dfrac{1}{t^2}\Gamma\left(\dfrac{t^2}{2}|\nabla^s \varphi_1(x)|^2\right) = \dfrac{1}{t^2}\int\limits_{0}^{t^2|\nabla^s \varphi_1(x)|^2/2} \gamma(\tau)d\tau \leq \frac{\gmax}{2}|\nabla^s \varphi_1(x)|^2.
\]
Whence, the dominated convergence theorem yields
\[
    \lim\limits_{t \to \infty} \int_{\Omega}\dfrac{1}{t^2}\Gamma\left(\dfrac{t^2}{2}|\nabla^s \varphi_1(x)|^2\right)dx 
    = \dfrac{\gamma(\infty)}{2}\int_{\Omega}|\nabla^s \varphi_1(x)|^2dx
    =\dfrac{\gamma(\infty)}{2}\poincConst\int_{\Omega}| \varphi_1|^2.
\]
The second term in the energy is bounded by observing that there exists $\delta>0$ such that $F(t) \geq \tfrac{\gamma(\infty)}{2}(\poincConst + \delta)t^2$, for any $t\geq 1$ (from~\ref{item:f:3}). Thus,
\[
    \liminf\limits_{t \to \infty}  \dfrac{1}{t^2}  \int\limits_{\Omega} F(t \varphi_1) dx \geq \dfrac{\gamma(\infty)}{2}(\poincConst + \delta) \int\limits_{\Omega} \varphi_1^2(x)dx,
\]
Putting together these two bounds, 
\[
\begin{aligned}
  \limsup\limits_{t \to \infty} \dfrac{1}{t^2}\E (t\varphi_1)&=\limsup\limits_{t \to \infty}\left( \dfrac{1}{t^2}\int\limits_{\Omega}\Gamma\left(\dfrac{t^2}{2}|\nabla^s \varphi_1|^2\right) - \dfrac{1}{t^2}  \int\limits_{\Omega} F(t \varphi_1) dx  -\dfrac{1}{t^2} \int\limits_{\Omega} h(t\varphi_1)dx  \right)\\ 
  &\leq -\dfrac{\delta\gamma(\infty)}{2}  \int\limits_{\Omega} \varphi_1^2dx<0.    
\end{aligned}
\]
\end{proof}

We now study the compactness of the functional \( \E \), by establishing the so-called Schechter-Palais-Smale condition. Let us recall that a sequence \( (u_n)_{n \in \mathbb{N}} \subset X_R \setminus \{0\} \) is said to be a \emph{Schechter-Palais-Smale sequence for \( \E \) at level \( c \in \mathbb{R} \)}, where \( X_R \) is the closed ball of radius \( R \) centered at the origin in \( H^s_0(\Omega) \), if
\[
    \E(u_n)\xrightarrow[n\to\infty]{} c,
\]
and, moreover, if it satisfies any the following conditions\smallskip

\begin{itemize}
    \item[(a)] \( u_n \in X_R \) for all \( n \), and 
    \begin{equation}\label{cond1}
        \E'(u_n)\xrightarrow[n\to\infty]{} 0; 
    \end{equation}
    
    \item[(b)] \( u_n \in \partial X_R \) for all \( n \), and 
    \begin{equation}\label{cond2}
        \E'(u_n) - \dfrac{\langle \E'(u_n), u_n\rangle}{R^2} J u_n \xrightarrow[n\to\infty]{} 0,\;\text{ while }\; \langle \E'(u_n), u_n \rangle \leq 0;
    \end{equation}

    \item[(c)] \( u_n \in \partial X_R \) for all \( n \), and 
    \begin{equation}\label{cond3} 
        \bar{J} \E'(u_n) - \dfrac{\langle \E u_n, \bar{J} \E'(u_n) \rangle}{R^2} u_n\xrightarrow[n\to\infty]{} 0,\;\text{ while }\; \langle Ju_n, \bar{J} \E'(u_n) \rangle \leq 0.
    \end{equation}
\end{itemize}

The following results will end up establishing that, in fact, \(\E\) satisfies the Schechter-Palais-Smale condition. The core of the argument relies in establishing the convergence of bounded Schechter-Palais-Smale sequences for \(\E\).

First, we prove that the following conditions are satisfied

\[
u - \bar{J}\E'(u)\in X \quad \text{for all} \ u \in X,
\]
\[
\min \left\{ \langle \E'(u), u\rangle,\langle Ju, \bar{J}\E'(u) \rangle \right\} \geq -\nu_0 \quad \text{for all} \ u \in \partial X_R, \;\text{ for some }\; \nu_0>0.
\]
Indeed,
\[
    u - \bar{J}\E'(u) \geq 0  \quad\Leftrightarrow\quad u \geq \dfrac{1}{\gmin}\dfrac{\|E'(u)\|}{\|u\|_{H_0^s(\Omega)}}.
\]
Using \ref{item:f:4} and $\|h\|_{L^2(\Omega)}$ is small we obtain 
\[
    u \geq - C(\|u\|_{H_0^s(\Omega)} +1).
\]
Now, by Lemma \ref{lemcoercive} we have $\langle \E'(u), u\rangle \geq -\nu_0$ for some $\nu_0>0$. Moreover, 
\[
\langle Ju, \bar{J}\E'(u) \rangle \geq - \|J(u)\|_{H_0^{s}}\|\bar{J}\E'(u)\|_{H_0^{-s}}\geq -C\|J(u)\|_{H_0^{s}}\|\E'(u)\|_{H_0^{-s}}
\]
Since \( J(u) \) and \( \E'(u) \) are bounded, the condition is satisfied.\smallskip

\begin{lemma}\label{lemconver}
    Assume~\ref{item:gamma:1}-\ref{item:gamma:2}. Suppose that $(u_n)_{n\in\N} \subset X$ is so that $u_n \rightharpoonup u$ in $X$, $\nabla^s u_n \to \nabla^s u $ a.e. on $\Omega$, and
    \[
        \limsup\limits_{n\to \infty} \int_{\Omega}\Gam{\frac{|\nabla^s u_n|}{2}}dx\leq \int_{\Omega}\Gam{\frac{|\nabla^s u|}{2}}dx.
    \]
    Then, $u_n \to u$ in $X$, up to a subsequence.
\end{lemma}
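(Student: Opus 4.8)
The plan is to reduce the assertion to convergence of the $L^2$-norms of the fractional gradients and then invoke the Radon--Riesz property of a Hilbert space. Recall that, with the convention used throughout the paper, $\|v\|_{H^s_0(\Omega)}^2=\int_\Omega|\nabla^s v|^2\,dx=\|\nabla^s v\|^2_{L^2(\Omega;\R^d)}$, so that $u_n\rightharpoonup u$ in $H^s_0(\Omega)$ is exactly $\nabla^s u_n\rightharpoonup\nabla^s u$ weakly in the Hilbert space $L^2(\Omega;\R^d)$ (here one uses that $\nabla^s$ is, in this norm, an isometry onto its range, hence weak-to-weak continuous). Since in a Hilbert space weak convergence together with convergence of the norms forces strong convergence, it suffices to prove that $\|\nabla^s u_n\|_{L^2(\Omega;\R^d)}\to\|\nabla^s u\|_{L^2(\Omega;\R^d)}$; this then gives $u_n\to u$ in $H^s_0(\Omega)$ (for the whole sequence, so the ``up to a subsequence'' clause is automatic).

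The key device is the splitting furnished by the lower bound $\gamma\geq\gmin$ in~\ref{item:gamma:1}. For $z\in\R^d$ write
\[
    \Gam{\tfrac{|z|^2}{2}}=\tfrac{\gmin}{2}|z|^2+\psi(z),\qquad\text{where }\;\psi(z):=\int_0^{|z|^2/2}\bigl(\gamma(\tau)-\gmin\bigr)\,d\tau,
\]
so that $\psi$ is continuous (as $\Gamma\in C^1$) and, since $\gamma\geq\gmin$, nonnegative. Consequently, with $\Phi$ as in~\eqref{eq:def:Phi},
\[
    \Phi(u_n)=\tfrac{\gmin}{2}\|\nabla^s u_n\|^2_{L^2(\Omega;\R^d)}+\int_\Omega\psi(\nabla^s u_n)\,dx.
\]

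The main step then combines the a.e.\ convergence $\nabla^s u_n\to\nabla^s u$ with Fatou's lemma applied to the nonnegative sequence $\psi(\nabla^s u_n)\to\psi(\nabla^s u)$ a.e., which yields $\int_\Omega\psi(\nabla^s u)\,dx\leq\liminf_n\int_\Omega\psi(\nabla^s u_n)\,dx$. Using this, the hypothesis $\limsup_n\Phi(u_n)\leq\Phi(u)$, and the elementary inequality $\limsup_n(a_n-b_n)\leq\limsup_n a_n-\liminf_n b_n$,
\[
    \tfrac{\gmin}{2}\limsup_{n\to\infty}\|\nabla^s u_n\|^2_{L^2(\Omega;\R^d)}=\limsup_{n\to\infty}\Bigl(\Phi(u_n)-\int_\Omega\psi(\nabla^s u_n)\,dx\Bigr)\leq\Phi(u)-\int_\Omega\psi(\nabla^s u)\,dx=\tfrac{\gmin}{2}\|\nabla^s u\|^2_{L^2(\Omega;\R^d)},
\]
hence $\limsup_n\|\nabla^s u_n\|_{L^2}\leq\|\nabla^s u\|_{L^2}$, while the reverse inequality is just weak lower semicontinuity of the $L^2$-norm. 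This gives the norm convergence sought in the first paragraph and finishes the proof.

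The only genuine obstacle is to locate a uniformly convex piece inside the integrand $\Gam{|\cdot|^2/2}$: the coercivity $\Gamma(t)\geq\gmin\,t$ is precisely what lets one peel off the quadratic term $\tfrac{\gmin}{2}|z|^2$ — whose $L^2$-square-norm is weakly l.s.c.\ — while the remainder $\psi$ stays nonnegative and is dealt with by Fatou through the pointwise convergence. Note that weak lower semicontinuity of $\Phi$ (Lemma~\ref{lemwsl}) alone would only give $\lim_n\Phi(u_n)=\Phi(u)$, which is not enough in the absence of strict convexity of $z\mapsto\Gam{|z|^2/2}$; the a.e.\ convergence hypothesis is essential for the Fatou step. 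Care is only needed with the direction of the $\limsup$/$\liminf$ bookkeeping and with checking $\psi\geq0$, both of which rest on~\ref{item:gamma:1}; the convexity assumption~\ref{item:gamma:2} plays no further role here.
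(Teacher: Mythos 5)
Your argument is correct, and it takes a genuinely different route from the paper's. The paper proves the lemma by adapting the Brezis--Lieb machinery: it forms the Brezis--Lieb remainder $W_{n,\epsilon}$ for the convex integrand $j(t)=\Gam{t^2/2}$, kills it by dominated convergence, deduces $\int_\Omega\Gam{\tfrac{|\nabla^s u_n|^2}{2}-\tfrac{|\nabla^s u|^2}{2}}\,dx\to 0$ from the $\limsup$ hypothesis, and only then invokes $\Gam{t}\geq\gmin t$ to recover strong convergence; this is why the paper lists~\ref{item:gamma:2} among the hypotheses (convexity is what Brezis--Lieb's Theorem~2 and Lemma~3 require). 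You instead peel off the exact quadratic part $\tfrac{\gmin}{2}|z|^2$ from $\Gam{|z|^2/2}$, observe that the remainder $\psi\geq 0$ is handled by Fatou via the a.e.\ convergence of the gradients, and close with weak lower semicontinuity of the norm plus the Radon--Riesz property of the Hilbert space. Your bookkeeping is sound: the identity $\tfrac{\gmin}{2}\|\nabla^s u_n\|^2=\Phi(u_n)-\int_\Omega\psi(\nabla^s u_n)\,dx$ holds pointwise in $n$, the subadditivity $\limsup(a_n-b_n)\leq\limsup a_n-\liminf b_n$ applies because $\limsup\Phi(u_n)\leq\Phi(u)<\infty$ and $b_n\geq 0$, and weak-to-weak continuity of $\nabla^s$ is immediate from its boundedness. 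What your approach buys is twofold: it avoids the Brezis--Lieb lemma entirely (and thus does not need the convexity hypothesis~\ref{item:gamma:2}, only the bounds in~\ref{item:gamma:1}), and it yields convergence of the whole sequence rather than a subsequence. It also sidesteps a formal glitch in the paper's writeup, where $\Gamma$ is evaluated at $\tfrac{|\nabla^s u_n|^2}{2}-\tfrac{|\nabla^s u|^2}{2}$, a quantity that can be negative and lies outside the domain $[0,\infty)$ of $\Gamma$ (the Brezis--Lieb remainder should involve $\Gam{|\nabla^s u_n-\nabla^s u|^2/2}$). The only caveat is notational: the lemma's displayed hypothesis reads $\Gam{|\nabla^s u_n|/2}$ without the square, and you have (correctly, in view of how the lemma is applied in Lemma~\ref{lemconverg}) read it as the hypothesis $\limsup_n\Phi(u_n)\leq\Phi(u)$ with $\Phi$ as in~\eqref{eq:def:Phi}.
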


\begin{proof}
We follow~\cite[Theorem 2]{BrezisLieb} adapted to our case. Define, for $k>1$ and $\epsilon\in(0,k)$ fixed,
\begin{align*}
    W_{n,\epsilon}(x)&:= \Gam{\frac{|\nabla^{s} u_n|^2}{2}}- \Gam{\frac{|\nabla^{s} u|^2}{2}}- 
    \Gam{\frac{|\nabla^{s} u_n|^2}{2}- \frac{|\nabla^{s} u|^2}{2}} \\ 
    &\quad\quad\quad - \epsilon \left[\Gam{k\left(\frac{|\nabla^{s} u_n|^2}{2}- \frac{|\nabla^{s} u|^2}{2}\right)}-k\Gam{\frac{|\nabla^{s} u_n|^2}{2}- \frac{|\nabla^{s} u|^2}{2} }\right]
\end{align*}
Clearly, $W_n \to 0$ a.e. in $\Omega$. There also exists domination in view of~\cite[Lemma 3]{BrezisLieb} with $j(s):=\Gam{t^2}$,
\[
|W_{n,\epsilon}| \leq 2\, \Gam{\frac{C_{\epsilon}^2|\nabla^{s}u_n|^2}{2}} \in L^1(\Omega).
\]
Thus, the Dominated Convergence Theorem ensures 
\[
    \int_{\Omega} W_{n, \epsilon}(x)\, dx\xrightarrow[n \to \infty]{} 0, 
\]
Now, the convexity of $j$ implies that
\[
    \limsup_{n\to \infty} \left\{\int_{\Omega} \Gam{\frac{|\nabla^{s} u_n|^2}{2}}dx - \int_{\Omega} \Gam{\frac{|\nabla^{s}u|^2}{2}} dx -  \int_{\Omega}\Gam{\frac{|\nabla^{s} u_n|^2}{2}- \frac{|\nabla^{s} u|^2}{2}} dx\right\}\leq \epsilon\,C
\]
whence
\[
    \int_{\Omega} \Gam{\frac{|\nabla^{s} u_n|^2}{2}}dx - \int_{\Omega} \Gam{\frac{|\nabla^{s} u|^2}{2}} dx - \int_{\Omega}\Gam{\frac{|\nabla^{s} u_n|^2}{2}- \frac{|\nabla^{s} u|^2}{2}} dx\longrightarrow 0.
\]
The latter, added to the hypothesis on the limit superior, allows us to conclude
\[
    \int_{\Omega} \Gam{\frac{|\nabla^{s} u_n|^2}{2}- \frac{|\nabla^{s} u|^2}{2}} dx \to 0.
\]
Finally, the bound on the integrand, $\Gam{t}\geq \gamma_{\min} t$ shows $u_n \to u$ in $X$, up to subsequence.
\end{proof}

The following is a technical result, required to apply the algo property in~\cite{Landes}.\smallskip

\begin{lemma}\label{monotono}
  Under~\ref{item:gamma:1}-\ref{item:gamma:2},  $\beta(z):=\gamma\left(\tfrac{|z|^2}{2}\right)z$ is a strictly monotone vector-field in $\R^d$. 
\end{lemma}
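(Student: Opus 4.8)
The plan is to show that the map $\beta(z) = \gamma\!\left(\tfrac{|z|^2}{2}\right) z$ satisfies the strict monotonicity inequality $\langle \beta(z_1) - \beta(z_2), z_1 - z_2 \rangle > 0$ for all $z_1 \neq z_2$ in $\R^d$, and the natural route is to realize $\beta$ as a gradient field and invoke strict convexity. Concretely, set $g(z) := \Gamma\!\left(\tfrac{|z|^2}{2}\right)$ for $z \in \R^d$; a direct computation gives $\nabla g(z) = \gamma\!\left(\tfrac{|z|^2}{2}\right) z = \beta(z)$, using $\Gamma' = \gamma$ from~\ref{item:gamma:1}. Thus it suffices to prove that $g$ is \emph{strictly} convex on $\R^d$, since for a $C^1$ strictly convex function one has $\langle \nabla g(z_1) - \nabla g(z_2), z_1 - z_2\rangle > 0$ whenever $z_1 \neq z_2$, which is exactly strict monotonicity of $\beta = \nabla g$.

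To establish strict convexity of $g$, I would first recall that~\ref{item:gamma:2} states that $t \mapsto \Gamma(t^2)$ is convex on $[0,\infty)$; writing $\phi(t) := \Gamma(t^2)$, we have $g(z) = \phi\!\left(\tfrac{|z|}{\sqrt 2}\right)$, i.e.\ $g$ is a convex, nondecreasing (since $\gamma>0$ forces $\Gamma$ increasing, hence $\phi$ increasing) function of the norm $|z|$. A radial function $z \mapsto \phi(|z|/\sqrt2)$ with $\phi$ convex and nondecreasing is convex on $\R^d$; this is the standard fact that a nondecreasing convex function composed with a (convex) norm is convex. For the \emph{strict} convexity I would argue along a segment $[z_1,z_2]$: if $z_1,z_2$ are not colinear through the origin, strict convexity of the norm $z\mapsto |z|$ on that segment combined with monotonicity of $\phi$ already gives strict inequality; if $z_1,z_2$ \emph{are} colinear with the origin, then $g$ restricted to the line reduces to $t \mapsto \phi(|t|/\sqrt2)$, and one uses that $\phi$ is \emph{strictly} convex on any interval where it is not affine — here one must use that $\gamma$ is continuous and strictly positive, so $\Gamma$ is strictly increasing and $\phi(t)=\Gamma(t^2)$ is strictly increasing, which upgrades convexity of $\phi$ to strict convexity on $(0,\infty)$ (a convex strictly increasing function of $t^2$ cannot be affine on any subinterval unless $\gamma$ is constant, and even then $\Gamma(t^2)$ is strictly convex). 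Assembling both cases yields $\langle\beta(z_1)-\beta(z_2),z_1-z_2\rangle>0$ for $z_1\neq z_2$.

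An alternative, more hands-on route that avoids the colinear/non-colinear case split is to differentiate: $\beta$ is $C^1$ away from $0$ with Jacobian
\[
    D\beta(z) = \gamma\!\left(\tfrac{|z|^2}{2}\right) I + \gamma'\!\left(\tfrac{|z|^2}{2}\right) z \otimes z,
\]
whose eigenvalues are $\gamma\!\left(\tfrac{|z|^2}{2}\right)>0$ (on the orthogonal complement of $z$) and $\gamma\!\left(\tfrac{|z|^2}{2}\right) + |z|^2\gamma'\!\left(\tfrac{|z|^2}{2}\right)$ (in the direction of $z$); the latter is precisely $\tfrac{d}{dt}\big(t\,\gamma(t^2/2)\big)$ evaluated appropriately, and one checks it equals a positive quantity because $\phi(t)=\Gamma(t^2)$ being convex forces $\phi''\ge0$, i.e.\ $\gamma(t^2)+2t^2\gamma'(t^2)\ge0$, while positivity of $\gamma$ handles the remaining slack; hence $D\beta(z)$ is positive definite for $z\neq0$. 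Integrating $\langle\beta(z_1)-\beta(z_2),z_1-z_2\rangle=\int_0^1\langle D\beta(z_2+t(z_1-z_2))(z_1-z_2),z_1-z_2\rangle\,dt$ along the segment (splitting off the measure-zero instant where the path hits $0$, if it does) yields strict positivity.

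\textbf{Main obstacle.} The delicate point is extracting \emph{strict} monotonicity rather than mere monotonicity out of hypothesis~\ref{item:gamma:2}, which only asserts plain convexity of $t\mapsto\Gamma(t^2)$: one must carefully use the strict positivity and continuity of $\gamma$ (so that $\Gamma$ is strictly increasing) to rule out degeneracy of $D\beta$ in the radial direction, and handle the behavior at $z=0$ and along rays through the origin. I expect the cleanest writeup to combine the gradient-field observation $\beta=\nabla g$ with a direct verification that $g(z)=\Gamma(|z|^2/2)$ is strictly convex, invoking the eigenvalue computation above only to pin down the radial direction.
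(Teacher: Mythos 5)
Your strategy---realize $\beta$ as the gradient of $z\mapsto\Gamma(|z|^{2}/2)$ and derive monotonicity from the convexity in~\ref{item:gamma:2}---is the right mechanism, and your non-colinear case is fine (strict convexity of the norm along segments not contained in a single ray, combined with $\phi(t):=\Gamma(t^{2})$ being convex and strictly increasing, does give strict inequality there). The genuine gap is exactly at the point you flag as the ``main obstacle'': the radial direction. Hypothesis~\ref{item:gamma:2} is \emph{equivalent} to the radial eigenvalue of $D\beta$ being $\geq 0$: convexity of $\phi$ says $\phi'(t)=2t\gamma(t^{2})$ is nondecreasing, i.e.\ (where $\gamma$ is differentiable) $\gamma(u)+2u\gamma'(u)\geq 0$, and substituting $u=|z|^{2}/2$ this is \emph{identical} to $\gamma(|z|^{2}/2)+|z|^{2}\gamma'(|z|^{2}/2)\geq 0$. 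There is no ``remaining slack'' for positivity of $\gamma$ to absorb. Likewise, your claim that $\phi$ is strictly convex on $(0,\infty)$ because $\gamma$ is positive and continuous is false: take $\gamma(u)=1$ for $u\leq 1$, $\gamma(u)=u^{-1/2}$ for $1\leq u\leq 4$, $\gamma(u)=1/2$ for $u\geq 4$. This satisfies~\ref{item:gamma:1}--\ref{item:gamma:2}, yet $\phi'(t)=2t\gamma(t^{2})\equiv 2$ for $t\in[1,2]$, so $\phi$ is affine there, $r\mapsto r\gamma(r^{2}/2)$ is constant for $r\in[\sqrt2,2\sqrt2]$, and for distinct colinear $z,z'$ with moduli in that range one gets $\langle\beta(z)-\beta(z'),z-z'\rangle=0$. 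So strictness genuinely fails in the radial direction under the stated hypotheses, and neither of your two routes can close this. (Your Jacobian route also assumes $\gamma\in C^{1}$, which~\ref{item:gamma:1} does not provide, though that is secondary.)

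For comparison, the paper's proof avoids gradients and Jacobians altogether: it expands $\langle\beta(z)-\beta(z'),z-z'\rangle$ directly, applies Cauchy--Schwarz, and uses only the monotonicity of $t\mapsto 2t\gamma(t^{2})$ (the derivative of $\Gamma(t^{2})$) to bound the expression below by $\gamma(|z'|^{2}/2)\,|z'|\bigl(|z|-\tfrac{\langle z,z'\rangle}{|z|}-\tfrac{\langle z,z'\rangle}{|z'|}+|z'|\bigr)\geq 0$, obtaining strict positivity only in the degenerate case $z'=0$. In other words, the paper's displayed chain also terminates in ``$\geq 0$'' and does not overcome the obstacle you identified; your diagnosis of where the difficulty sits is accurate, but the resolutions you propose (strict convexity of $\phi$ on $(0,\infty)$, positive definiteness of $D\beta$) are not consequences of~\ref{item:gamma:1}--\ref{item:gamma:2}.
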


\begin{proof}
 Let $z,z'\in\R^d$ with $z\neq z'$. Since the inner product is bilinear, without loss of generality we assume $|z|\geq |z'|$. Furthermore, we assume $|z'|>0$, as otherwise the property follows:
 \[
   \inn{\gam{\frac{|z|^2}{2}}z - \gam{\frac{|0|^2}{2}}0}{z-0}= \gam{\frac{|z|^2}{2}}|z|^2>0.
 \]
 The monotonicity of $t\mapsto\tfrac{d}{dt}\Gamma(t^2)=2\gamma(t^2)t$ (see~\ref{item:gamma:2}) together with the Cauchy-Schwarz inequality, yields
\[
\begin{aligned}
    \inn{\gam{\frac{|z|^2}{2}}z - \gam{\frac{|z'|^2}{2}}z'}{z-z'} 
    &=\gam{\frac{|z|^2}{2}}(|z|^2-\inn{z}{z'})-\gam{\frac{|z'|^2}{2}}(\inn{z'}{z}-|z'|^2)\\
    &\geq \gam{\frac{|z'|^2}{2}}|z'|\left(|z|-\frac{\inn{z}{z'}}{|z|}\right)-\gam{\frac{|z'|^2}{2}}|z'|\left(\frac{\inn{z'}{z}}{|z'|}-|z'|\right)\\
    &=\gam{\frac{|z'|^2}{2}}|z'|\left(|z|-\frac{\inn{z'}{z}}{|z'|}-\frac{\inn{z}{z'}}{|z|}+|z'|\right)\geq 0.
\end{aligned}
\]
\end{proof}

\begin{lemma}\label{lemconverg}
Assume~\ref{item:f:1}-\ref{item:f:2}  and~\ref{item:gamma:1}-\ref{item:gamma:2}. Then, each bounded Schechter-Palais-Smale sequence of $\E$ admits a convergent subsequence in $X$.   
\end{lemma}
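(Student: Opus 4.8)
The plan is to pass to a weak limit, reduce all three alternatives in the Schechter--Palais--Smale condition to the single assertion $J_{u_n}[u_n-u]\to 0$, and then upgrade weak convergence to strong convergence using the strict monotonicity of $\beta(z):=\gam{|z|^2/2}z$ (Lemma~\ref{monotono}) together with Lemma~\ref{lemconver}. Here and below $J_{u_n}$ denotes the functional $\varphi\mapsto\int_\Omega\gam{|\nabla^su_n|^2/2}\inn{\nabla^su_n}{\nabla^s\varphi}\,dx$.

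First I would fix a bounded Schechter--Palais--Smale sequence $(u_n)_n\subset X$ with $\|u_n\|_{H^s_0(\Omega)}\le R$. Since $H^s_0(\Omega)$ is reflexive and $X$ is convex and strongly closed, hence weakly closed, along a subsequence $u_n\rightharpoonup u$ in $H^s_0(\Omega)$ with $u\in X$; by the compact embedding $H^s_0(\Omega)\hookrightarrow L^q(\Omega)$ for $q\in[1,2^*_s)$ one also has $u_n\to u$ in $L^q(\Omega)$ and a.e.\ in $\Omega$, while $\nabla^su_n\rightharpoonup\nabla^su$ in $L^2(\Omega;\R^d)$. Observe that $\E'(u_n)$ is bounded in $H^{-s}(\Omega)$: the $\gamma$-term by~\ref{item:gamma:1}, the reaction term by the growth $|f(t)|\le C(1+|t|^p)$ coming from~\ref{item:f:1}--\ref{item:f:2} with $p+1<2^*_s$, and $h\in L^2(\Omega)\hookrightarrow H^{-s}(\Omega)$. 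Moreover, H\"older's inequality together with the strong $L^q$-convergence yields $\int_\Omega f(u_n)(u_n-u)\,dx\to0$ and $\int_\Omega h(u_n-u)\,dx\to0$, so that $\E'(u_n)[u_n-u]=J_{u_n}[u_n-u]+o(1)$.

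Next I would check that $J_{u_n}[u_n-u]\to0$ under each of the alternatives (a)--(c). In case (a), $\E'(u_n)\to0$ in $H^{-s}(\Omega)$ gives $\E'(u_n)[u_n-u]\to0$ at once. In case (b), pairing $\E'(u_n)-R^{-2}\inn{\E'(u_n)}{u_n}\,Ju_n\to0$ with $u_n-u$ yields $\bigl(1-R^{-2}\inn{\E'(u_n)}{u_n}\bigr)J_{u_n}[u_n-u]\to0$; as $\inn{\E'(u_n)}{u_n}\le0$ the factor is $\ge1$, and hence $J_{u_n}[u_n-u]\to0$. In case (c), set $w_n:=\bar{J}\E'(u_n)$, so that $\E'(u_n)=J_{u_n}w_n$, $\E'(u_n)[w_n]=\int_\Omega\gam{|\nabla^su_n|^2/2}|\nabla^sw_n|^2dx\ge c\,\|w_n\|^2_{H^s_0(\Omega)}$ for some $c>0$, and $\inn{Ju_n}{\bar{J}\E'(u_n)}=\E'(u_n)[u_n]\le0$; pairing the relation $w_n-R^{-2}\E'(u_n)[w_n]\,u_n\to0$ (reading $\E$ as $\E'$) with $\E'(u_n)$ and using $\E'(u_n)[u_n]\le0$ forces $\E'(u_n)[w_n]\to0$, hence $\|w_n\|_{H^s_0(\Omega)}\to0$ and $\|\E'(u_n)\|_{H^{-s}(\Omega)}\le C\gmax\|w_n\|_{H^s_0(\Omega)}\to0$, so case (c) collapses to case (a).

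Finally I would convert this into almost everywhere convergence of the nonlocal gradients. Since $\beta(\nabla^su)\in L^2(\Omega;\R^d)$ and $\nabla^s(u_n-u)\rightharpoonup0$ in $L^2$, we get $\int_\Omega\inn{\beta(\nabla^su)}{\nabla^s(u_n-u)}\,dx\to0$; subtracting this from $J_{u_n}[u_n-u]=\int_\Omega\inn{\beta(\nabla^su_n)}{\nabla^s(u_n-u)}\,dx\to0$ gives
\[
    \int_\Omega\inn{\beta(\nabla^su_n)-\beta(\nabla^su)}{\nabla^su_n-\nabla^su}\,dx\longrightarrow0.
\]
By Lemma~\ref{monotono} the integrand is nonnegative, so it converges to $0$ in $L^1(\Omega)$ and, along a further subsequence, a.e.\ in $\Omega$; a standard Landes-type pointwise argument --- the coercivity $\inn{\beta(z)}{z}\ge\gmin|z|^2$ forbids $|\nabla^su_n(x)|\to\infty$, while strict monotonicity forbids any other cluster value $\ne\nabla^su(x)$ --- then yields $\nabla^su_n\to\nabla^su$ a.e.\ in $\Omega$. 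On the other hand, applying the convexity inequality~\eqref{eq:lemma:lowesci:aux:1} with $u_1=u$ and $u_2=u_n$ gives $\Phi(u_n)\le\Phi(u)+J_{u_n}[u_n-u]$, so $\limsup_{n\to\infty}\int_\Omega\Gam{|\nabla^su_n|^2/2}\,dx\le\int_\Omega\Gam{|\nabla^su|^2/2}\,dx$. All hypotheses of Lemma~\ref{lemconver} now hold, whence $u_n\to u$ strongly in $X$ along a subsequence. The main obstacle is exactly this last step: extracting pointwise convergence of the nonlocal gradients $\nabla^su_n$ --- without which Lemma~\ref{lemconver} is inapplicable --- and this is precisely where the strict monotonicity of $\beta$ (Lemma~\ref{monotono}) and the Landes compactness trick are indispensable; unwinding $\bar{J}=J^{-1}$ to reduce alternative (c) to (a) is the remaining, comparatively minor, technical point.
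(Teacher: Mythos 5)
Your argument is correct and its core coincides with the paper's: pass to a weak limit, reduce the Schechter--Palais--Smale alternatives to $\Phi'(u_n)[u_n-u]\to 0$, combine the strict monotonicity of $\beta$ (Lemma~\ref{monotono}) with the Landes pointwise argument to obtain $\nabla^s u_n\to\nabla^s u$ a.e., and then invoke the convexity inequality~\eqref{eq:lemma:lowesci:aux:1} to verify the $\limsup$ hypothesis of Lemma~\ref{lemconver}. The differences lie in how the boundary alternatives are dispatched. In case (b) you use the sign condition $\langle\E'(u_n),u_n\rangle\le 0$ to bound the scalar factor below by $1$, which avoids the paper's step of extracting a convergent subsequence of the multipliers $-\langle\E'(u_n),u_n\rangle/R^2\to\rho\ge 0$; both work. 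In case (c) you collapse the alternative to case (a) by pairing the defining relation with $\E'(u_n)$ and deducing $\|\bar{J}\E'(u_n)\|_{H^s_0(\Omega)}\to 0$, hence $\E'(u_n)\to 0$; the paper instead splits into $\rho=0$ and $\rho>0$ and, in the latter branch, represents $u_n$ as $\tfrac1\rho v_n-\tfrac1\rho\bar{J}(\Phi'(u_n)-f(u_n)-h)$ and argues relative compactness of the right-hand side. Your route is arguably tighter there: the paper's appeal to Lemma~\ref{lemconver} for the relative compactness of $\Phi'(u_n)$ in that branch presupposes the a.e.\ convergence of the nonlocal gradients, which has not yet been established at that point, whereas your reduction uses only the sign condition and the coercivity $\E'(u_n)[\bar{J}\E'(u_n)]\ge\gmin\|\bar{J}\E'(u_n)\|^2_{H^s_0(\Omega)}$. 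The only caveat is that both treatments of (c) depend on the precise meaning of $J$ and $\bar{J}=J^{-1}$, which the paper leaves ambiguous; your reading of $\bar{J}$ as the inverse of the $u_n$-weighted Riesz isomorphism is the natural one, and under it your algebra (in particular the identity $\langle Ju_n,\bar{J}\E'(u_n)\rangle=\E'(u_n)[u_n]$, by symmetry of the weighted bilinear form) goes through.
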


\begin{proof}
Let $(u_n)_{n\in \N}\subset X$ be a bounded Schechter-Palais-Smale sequence of $\E$. Assume, without loss of generality,
\[
    u_n \rightharpoonup u\,\text{ in }X\quad\text{ and }\quad u_n \to u \;\text{ in }L^q(\Omega)\;\;\text{ for }\;q \in [2, 2^*).
\]
If $(u_n)_{n\in \N}$ satisfy condition~\eqref{cond2}, we can assume that $-\langle \E'(u_k), u_n \rangle/R^2\to \rho \geq 0$, thus
\[
    \E'(u_n) + \rho \Phi' (u_n) \to 0.
\]
Since \(\E'(u_n)(u - u_n) \to 0\), we directly argue that \(\Phi'(u_n)(u - u_n) \to 0\). Hence, 
\[
    (\Phi'(u_n) - \Phi'(u))(u_n - u) = \int_{\Omega} \inn{\gam{\frac{|\nabla^{s} u_n|^2}{2}}\nabla^{s} u_n - \gam{\frac{|\nabla^{s} u|^2}{2}}\nabla^{s} u}{\nabla^s u_n -\nabla^s u}dx\longrightarrow 0.
\]
Consequently,
\[
    \inn{\gam{\frac{|\nabla^{s} u_n|^2}{2}}\nabla^{s} u_n - \gam{\frac{|\nabla^{s} u|^2}{2}}\nabla^{s} u}{\nabla^s u_n - \nabla^s u}\xrightarrow[n\to\infty]{}0\quad\text{ a.e. }\; x \in \Omega.
\]
The latter, together with Lemma \ref{monotono}, meet all the hypotheses of~\cite[Lemma 6]{Landes}. Thus,
\[
    \nabla^{s} u_n \to \nabla^{s} u\quad\text{ a.e. }\;\;\text{in}\;\;\Omega.
\]
We now recall the convexity character of $\Phi$,~\eqref{eqpo}, that yields 
\[
\Phi(u) - \Phi(u_n) - \Phi'(u_n)[u-u_n]\geq 0. 
\]
which, in turn, implies
\[
\limsup_{n \to \infty} \Phi(u_n) \leq \Phi (u).
\]
As all the hypotheses of Lemma~\ref{lemconver} are met, the desired convergence $u_{n_k}\to u$ in $X$ is obtained.\\

If $(u_n)_{n\in \N}$ satisfy condition~\eqref{cond3}, we assume that $-\langle \Phi'(u_n), \bar{J} \E'(u_n) \rangle/R^2\to \rho \geq 0$, then 
\[
 \bar{J} \E'(u_n) + \rho u_n \to 0.
\]
When $\rho=0$, it follows that \(\E'(u_n)(u - u_n) \to 0\), and proceed in a similar way to the previous case. If $\rho>0$, taking
\[
    v_n:=  \bar{J} \E'(u_n) + \rho u_n ,
\]
then,
\[
    u_n= \dfrac{1}{\rho}v_n - \dfrac{1}{\rho} \bar{J} \E'(u_n) .
\]
This is, 
\[
    u_n= \dfrac{1}{\rho}v_n - \dfrac{1}{\rho} \bar{J}(\Phi'(u_n) - f(u_n) - h ).
\]

By Lemma \ref{lemconver} and \ref{item:gamma:1}, it follows that the sequence \( \Phi'(u_n) \) is relatively compact in \( H^{-s}_0(\Omega) \). Furthermore, the relative compactness of the sequence \( f(u_n) - h \) in \( H^{-s}_0(\Omega) \) is also clear.
\end{proof}
\smallskip

\begin{proposition}\label{cotau}
    The sequence $(u_n)_{n\in \N}$ is bounded in $X$.
\end{proposition}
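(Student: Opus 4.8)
The plan is to read off the boundedness directly from the Schechter mountain--pass construction that produces $(u_n)$, so that no growth estimate on $f$ is actually needed. First I would recall the mountain--pass geometry in the ball: Lemma~\ref{lc} gives $r_h\in(0,R)$ and $\alpha(r_h)>0$ with $\inf\{\E(u):\|u\|_{H^s_0(\Omega)}=r_h\}\ge\alpha(r_h)>0=\E(0)$, while Lemma~\ref{cond2} ($\E(t\varphi_1)<0$ for $t$ large) produces a point $e\in\partial X_R$, of the form $e=(R/\|\varphi_1\|_{H^s_0(\Omega)})\,\varphi_1$, which is legitimate once $R$ has been fixed large, since $\E(t\varphi_1)<0$ for all large $t$ and $\|t\varphi_1\|_{H^s_0(\Omega)}=t\|\varphi_1\|_{H^s_0(\Omega)}\to\infty$. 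Condition~\eqref{condbound} is exactly the non--degeneracy hypothesis that lets Schechter's bounded minimax principle apply here: it rules out $u\in\partial X_R$ with $\E'(u)+\rho\,Ju=0$ for some $\rho>0$, and, together with the invariance $u-\bar J\E'(u)\in X$ for $u\in X$ and the lower bound $\min\{\langle\E'(u),u\rangle,\langle Ju,\bar J\E'(u)\rangle\}\ge-\nu_0$ on $\partial X_R$ verified above, places us within the cone framework of Precup--Varga~\cite{radu}.

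With these ingredients, the bounded minimax principle furnishes, at the level
\[
    c:=\inf_{\eta}\ \max_{t\in[0,1]}\E\bigl(\eta(t)\bigr)\ \ge\ \alpha(r_h)>0,
\]
the infimum running over continuous paths $\eta:[0,1]\to X_R\cap X$ with $\eta(0)=0$ and $\eta(1)=e$, a Schechter--Palais--Smale sequence $(u_n)_{n\in\N}\subset(X_R\setminus\{0\})\cap X$ at level $c$, i.e.\ one satisfying $\E(u_n)\to c$ together with one of the alternatives (a), (b), (c) in the definition above. In each of those alternatives we have $u_n\in X_R$ (case (a)) or $u_n\in\partial X_R\subset X_R$ (cases (b), (c)) for every $n$; since $X_R$ is the closed ball of radius $R$ about the origin of $H^s_0(\Omega)$, this gives $\|u_n\|_{H^s_0(\Omega)}\le R$ for all $n\in\N$. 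As $(u_n)\subset X$ as well, the sequence is bounded in $X$, which is the assertion.

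I do not expect a genuine obstacle in this proposition: the boundedness is hard--wired into the bounded--ball formulation, and this is precisely why the problem has been set up in $X_R$ rather than through the classical Palais--Smale condition. Indeed, under~\ref{item:f:3}--\ref{item:f:4} the reaction term has at most linear growth at infinity, with asymptotic slope $\ge\ginf\,\poincConst\ge\gmin\,\poincConst$, so testing the equation along a putative unbounded Palais--Smale sequence and using $\Gam{t}\ge\gmin\,t$ only yields an estimate of the form $\bigl(\gmin-C(\poincConst)^{-1}\bigr)\|u_n\|_{H^s_0(\Omega)}^2\le o\bigl(\|u_n\|_{H^s_0(\Omega)}\bigr)+\text{const}$, which is vacuous when $C\ge\gmin\,\poincConst$; ordinary Palais--Smale sequences at the mountain--pass level therefore need not be bounded. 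The only substantive step that remains is the extraction of a subsequence of $(u_n)$ converging in $X$, and that is carried out in Lemma~\ref{lemconverg} through the strict monotonicity of Lemma~\ref{monotono} and the Brezis--Lieb type argument of Lemma~\ref{lemconver}.
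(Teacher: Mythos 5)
Your argument establishes the literal statement only under one reading: if $(u_n)$ is a Schechter--Palais--Smale sequence in the sense defined in the paper, then indeed $(u_n)\subset X_R$ by definition and $\|u_n\|_{H^s_0(\Omega)}\le R$ is immediate. But this is not what the paper proves, and it bypasses the mathematical content of the proposition. The paper's proof is a genuine a priori estimate extracted from the equation itself: testing against $u_n$ and using $\gamma\ge\gmin$ reduces the $H^s_0(\Omega)$--bound to an $L^2(\Omega)$--bound; assuming $\|u_n\|_{L^2(\Omega)}\to\infty$, one normalizes $v_n:=u_n/\|u_n\|_{L^2(\Omega)}$, shows $(v_n)$ is bounded in $H^s_0(\Omega)$ using \ref{item:f:4}, extracts a weak limit $v\ne 0$, tests against $\varphi_1/\|u_n\|_{L^2(\Omega)}$, and combines \ref{item:f:3} (in the non-resonant form $f(t)\ge At-B$ with $A>\ginf\,\poincConst$) with the convergence of the quasilinear term proved in Appendix~\ref{secA1} to reach the contradiction $\ginf\,\poincConst\int_\Omega v\varphi_1\,dx\ge A\int_\Omega v\varphi_1\,dx$. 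Note that the paper's proof never invokes $u_n\in X_R$; if the proposition were the tautology you describe, that proof and the entire Appendix would be pointless.

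Two specific corrections. First, your closing claim that ordinary Palais--Smale sequences at the mountain-pass level ``need not be bounded'' because the naive test yields a vacuous estimate does not follow: the naive estimate fails, but the normalization argument shows that boundedness \emph{does} follow from the equation under \ref{item:f:3}--\ref{item:f:4}; that is exactly the content you are dismissing as hard-wired into the ball. Second, because the proposition is evidently intended as an a priori bound on (near-)critical points that does not presuppose confinement to $X_R$, your proof as written has a genuine gap for that purpose: you must either justify that every sequence to which the proposition is applied is a priori confined to $X_R$, or carry out the $v_n$--normalization and non-resonance argument. As it stands, your write-up trades the substantive estimate for a definitional remark and then asserts (incorrectly) that the substantive estimate is unavailable.
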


\begin{proof}
The criticality of $u_n$ with respect to $\E$, tested against $u_n$, gives 
\begin{equation}\label{deriEu}
     \int_{\Omega} \gam{\frac{|\nabla^{s} u_n|^2}{2}}|\nabla^{s} u_n|^2dx -\int_{\Omega} f(u_n)u_n dx-\int_{\Omega} hu_n\,dx=0,  
\end{equation}
whence,
\[
    \gamma_{\min}\int_{\Omega} |\nabla^{s} u_n|^2dx\leq  \int_{\Omega}f(u_n) u_n dx+\int_{\Omega} hu_n\,dx
\]
in view of~\ref{item:gamma:1}. This shows that the boundedness of $(u_n)_{n\in \N}$ in $X$ is reduced to proving the boundedness of the sequence in $L^2(\Omega)$. 

To establish the latter, we argue by contradiction, so let us assume \(\|u_n\|_{L^2(\Omega)} \to \infty\). We define an auxiliary sequence $v_n:=u_n/\|u_n\|_{L^2(\Omega)}$ that clearly lies in the unit sphere in $L^2(\Omega)$. We now claim that $(v_n)_{n\in \N}$ is, in fact, bounded in $X$. On the one hand, equation~\eqref{deriEu} normalized by $\|u_n\|_{L^2(\Omega)}^2$ implies
\begin{align*}
    \int_{\Omega} \gam{\frac{|\nabla^{s} u_n|^2}{2}}|\nabla^{s} v_n|^2dx 
    &\leq \|f(u_n)\|_{L^2(\Omega)}\|u_n\|^{-1}_{L^2(\Omega)} + \|h\|_{L^2(\Omega)}\|u_n\|^{-1}_{L^2(\Omega)}.
\end{align*}
On the other hand, assumption~\ref{item:f:4} yields the bound
\[
 \|f(u_n) \|_{L^2(\Omega)} \leq C(1+ \|u_n\|_{L^2(\Omega)}).
\]
The latter inequalities combined with~\ref{item:gamma:1} finish the proof of the claim. Hence, up to a subsequence 
\[
    \exists v\in X:\quad v_n \rightharpoonup v \;\;\text{in}\; H_0^s(\Omega) \quad \text{and} \quad v_n \to v \;\;\text{in}\; L^2(\Omega).
\]
Now, we can test the criticality of $u_n$ in $\E$ against $\varphi_1/\|u_n\|_{L^2(\Omega)}$, to obtain
\begin{equation}\label{testv}
\begin{split}
    &\int_{\Omega} \gam{\frac{|\nabla^{s} u_n|^2}{2}}\inn{\nabla^{s}v_n}{\nabla^s\varphi_1}dx 
    =\int_{\Omega} f(u_n)u_n\dfrac{\varphi_1}{\|u_n\|_{L^2(\Omega)}}dx+\int_{\Omega} h\dfrac{\varphi_1}{\|u_n\|_{L^2(\Omega)}}\,dx\\
    &\geq  A \int_{\Omega} v_n \varphi_1\, dx -\dfrac{B}{\|u_n\|_{L^2(\Omega)}}\int_{\Omega} \varphi_1 \,dx+\dfrac{1}{\|u_n\|_{L^2(\Omega)}}\int_{\Omega} h\varphi_1\,dx,
\end{split}
\end{equation}
for some constants $A>\gamma(\infty)\lambda_1^s$ and $B>0$ and $n\in\N$ asymptotically large, in view of~\ref{item:f:3}, since 
\[
    f(t) \geq A t - B,\;\;\text{for all}\; t\in \R.
\]
Taking limit $n\to\infty$ in~\eqref{testv}, we get:
\[
    \liminf_{n\to\infty}\int_{\Omega} \gam{\frac{|\nabla^{s} u_n|^2}{2}}\inn{\nabla^{s}v_n}{\nabla^s\varphi_1}dx\geq A\int_{\Omega} v \varphi_1\,dx.
\]
Now, based on the assumptions in~\ref{item:gamma:1} of the density, it can be argued that
\[
    \lim_{n\to\infty}\int_{\Omega} \gam{\frac{|\nabla^{s} u_n|^2}{2}}\inn{\nabla^{s}v_n}{\nabla^s\varphi_1}dx = \gamma(\infty)\int_{\Omega}\inn{\nabla^{s} v}{\nabla^s\varphi_1}dx=\gamma(\infty)\,\poincConst\int_{\Omega}v\varphi_1\,dx,
\]
whose proof has been relegated to the Appendix~\ref{secA1}. 
Since $v\neq 0$,  this contradicts the fact that $A> \gamma(\infty)\lambda_1^s$.   
\end{proof}

\subsection{Proof of Theorem~\ref{thm:1}}
First, we will prove the bounded condition by contradiction. Suppose, that there exists some \( u \in X \) with \( \|u\|_{H^s_0(\Omega)} = R \) such that \( \E' + \rho J = 0 \) for some \( \rho > 0 \). Then,
\[
    \int_{\Omega} \gam{\frac{|\nabla^{s} u|^2}{2}} |\nabla^{s} u|^2 dx -\int_{\Omega} f(u)u dx -\int_{\Omega} hu \,dx= \rho \int_{\Omega} \gam{\frac{|\nabla^{s} u|^2}{2}} |\nabla^{s} u|^2 dx.
\]
From here, it follows that
\[
(1 + \rho)\gmin \|u\|^2_{H^s_0(\Omega)} \leq \int_{\Omega} f(u)u \,dx +\int_{\Omega} hu \,dx.
\]
Thus, using Hölder's inequality and by \ref{item:f:4} for $R \geq t_0$, we have 

\[
R^2 < (1 + \rho)\gmin R^2 \leq Cr_h^2 + \|h\|^2_{L^2(\Omega)}R.
\]
which contradicts \eqref{condbound}. 

By Lemma \ref{lem1sol}, part (i) follows directly. On the other hand, the sequence \((u_n)_{n \in \mathbb{N}}\) is bounded in \(X\), by virtue of Proposition~\ref{cotau}. Additionally, by Lemma~\ref{lemconverg}, the sequence \((u_n)_{n \in \mathbb{N}}\) converges strongly in \(X\), up to a subsequence. Consequently, the conclusion follows from \cite[Theorems 2.9 and 2.10]{radu}.
\qed

\bigskip

\bmhead{Acknowledgements}
LC was partially supported by ANID Chile through the Postdoctoral Fondecyt Grant No.~3240062 and by Universidad de O'Higgins (UOH) through the postdoctoral position at the Institute of Engineering Sciences (ICI). AQ was partially supported by ANID Chile through the Fondecyt Grant No.~1231585.

\section*{Declarations}
Not applicable.

\begin{appendices}
\section{Convergence of the quasilinear terms}\label{secA1}
The next is a technical result, used to establish Proposition~\ref{cotau}. The proof is inspired in~\cite[Proposition~6.1]{radulescu}.\smallskip 

\begin{proposition}
Let \((u_n)_{n\in\N }\) and \((v_n)_{n\in\N}\) be the sequences defined in Proposition \ref{cotau}. Then,
\[
    \int_{\Omega} \gam{\frac{|\nabla^{s} u_n|^2}{2}}\inn{\nabla^{s}v_n}{\nabla^s w} dx \xrightarrow[n \to \infty]{} \gamma(\infty) \int_{\Omega} \inn{\nabla^{s}v}{\nabla^s w} dx, \quad \forall w\in H^s_{0}(\Omega).
\]
\end{proposition}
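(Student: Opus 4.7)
The plan is to exploit the asymptotic behavior $\gamma(t)\to\gamma(\infty)$ as $t\to\infty$ (from \ref{item:gamma:1}) together with the normalization $v_n=u_n/\|u_n\|_{L^2(\Omega)}$ and the hypothesis $\|u_n\|_{L^2(\Omega)}\to\infty$. The natural decomposition is
\[
\int_{\Omega}\gam{\frac{|\nabla^su_n|^2}{2}}\inn{\nabla^sv_n}{\nabla^sw}dx = \gamma(\infty)\int_{\Omega}\inn{\nabla^sv_n}{\nabla^sw}dx + I_n,
\]
where $I_n:=\int_{\Omega}\bigl[\gam{|\nabla^su_n|^2/2}-\gamma(\infty)\bigr]\inn{\nabla^sv_n}{\nabla^sw}\,dx$. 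The first term on the right converges to $\gamma(\infty)\int_{\Omega}\inn{\nabla^sv}{\nabla^sw}\,dx$ directly from the weak convergence $v_n\rightharpoonup v$ in $H^s_0(\Omega)$, since $u\mapsto\int_{\Omega}\inn{\nabla^su}{\nabla^sw}dx$ defines a continuous linear functional on $H^s_0(\Omega)$ (via the equivalence~\eqref{eq:intro:aux:equivalence}).

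The bulk of the work is to show $I_n\to 0$. Given $\epsilon>0$, choose $T=T(\epsilon)>0$ from~\ref{item:gamma:1} such that $|\gamma(t)-\gamma(\infty)|<\epsilon$ for all $t>T$, and split the domain $\Omega=A_n\cup(\Omega\setminus A_n)$, where
\[
A_n:=\left\{x\in\Omega:\tfrac{1}{2}|\nabla^su_n(x)|^2>T\right\}.
\]
On $A_n$, the uniform bound $|\gamma-\gamma(\infty)|<\epsilon$ together with Cauchy--Schwarz yields
\[
\left|\int_{A_n}\bigl[\gam{|\nabla^su_n|^2/2}-\gamma(\infty)\bigr]\inn{\nabla^sv_n}{\nabla^sw}dx\right|\leq \epsilon\,\|\nabla^sv_n\|_{L^2(\Omega)}\|\nabla^sw\|_{L^2(\Omega)}\leq C\epsilon,
\]
using that $(v_n)$ is bounded in $H^s_0(\Omega)$ (as shown in Proposition~\ref{cotau}).

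On the complementary set $\Omega\setminus A_n$, one exploits the key identity $\nabla^sv_n=\nabla^su_n/\|u_n\|_{L^2(\Omega)}$: the definition of $A_n$ gives $|\nabla^sv_n(x)|\leq \sqrt{2T}/\|u_n\|_{L^2(\Omega)}$ pointwise, so together with the uniform bound $|\gamma-\gamma(\infty)|\leq 2\gmax$ and Cauchy--Schwarz,
\[
\left|\int_{\Omega\setminus A_n}\bigl[\gam{|\nabla^su_n|^2/2}-\gamma(\infty)\bigr]\inn{\nabla^sv_n}{\nabla^sw}dx\right|\leq \frac{2\gmax\sqrt{2T}\,|\Omega|^{1/2}}{\|u_n\|_{L^2(\Omega)}}\|\nabla^sw\|_{L^2(\Omega)}\longrightarrow 0,
\]
since $\|u_n\|_{L^2(\Omega)}\to\infty$. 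Combining both estimates yields $\limsup_n|I_n|\leq C\epsilon$, and since $\epsilon>0$ was arbitrary, $I_n\to 0$, which completes the proof.

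The main conceptual obstacle is that a naive approach—trying to pass $\gamma$ through the limit pointwise—fails because one lacks pointwise convergence of $\nabla^sv_n$. The decisive observation is a \emph{dichotomy}: either $|\nabla^su_n(x)|$ is large, in which case $\gamma$ is already close to $\gamma(\infty)$; or $|\nabla^su_n(x)|$ is bounded, in which case the normalization by the diverging factor $\|u_n\|_{L^2(\Omega)}^{-1}$ forces $\nabla^sv_n(x)$ to be small. No pointwise a.e.\ convergence of $\nabla^sv_n$ is actually needed, only the quantitative threshold argument above.
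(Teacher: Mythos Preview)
Your proof is correct and follows essentially the same approach as the paper: both split off the weak-limit term $\gamma(\infty)\int_{\Omega}\inn{\nabla^sv_n}{\nabla^sw}\,dx$ and then control the remainder by partitioning $\Omega$ into a ``large gradient'' region (where $\gamma$ is close to $\gamma(\infty)$) and a ``small gradient'' region (where $|\nabla^sv_n|$ is small). The only cosmetic difference is that the paper sets the threshold on $|\nabla^sv_n|$ while you set it on $|\nabla^su_n|$, but since $\nabla^sv_n=\nabla^su_n/\|u_n\|_{L^2(\Omega)}$ these are equivalent parametrizations of the same dichotomy.
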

\begin{proof}
  Since $v_n \rightharpoonup v$ in $H_{0}^s(\Omega)$, taking  $t_n= 1/\|u_n\|_{L^2(\Omega)}$, what we need to prove is equivalent to
  \[
    \int_{\Omega} \left(\gam{\frac{|\nabla^{s} v_n|^2}{2t_n^2}} - \gamma(\infty) \right) \inn{\nabla^{s}v_n}{\nabla^s w}dx \xrightarrow[n \to \infty]{} 0.
\]
There exists $M>0$ s.t. $\sup_{n\in\N}\int_{\Omega} |\nabla^{s} v_n|^2\, dx \leq M$, as this sequence is bounded in $H_0^s(\Omega)$. 
Consider now, for $n\in\N$ and $\epsilon>0$ fixed, the following partition of $\Omega$:
\[
    A^{\epsilon}_n:=\left\{x\in \Omega :|\nabla^{s} v_n(x)|\leq\epsilon\right\} \quad \text{and}\quad B^{\epsilon}_n:= \Omega\setminus A^{\epsilon}_n.
 \]
Now, define the integrals on each component
\[
    \left(\int_{A^{\epsilon}_n}+\int_{B^{\epsilon}_n}\right)\left(\gam{\frac{|\nabla^{s} v_n|^2}{2t_n^2}} - \gamma(\infty)\right)\inn{\nabla^{s}v_n}{\nabla^sw}dx=:{\rm I}^{\epsilon}_{n}+{\rm II}^{\epsilon}_{n}.
\]
Recalling that $\gamma$ is bounded (see~\ref{item:gamma:1}), we directly get a the bound on ${\rm I}^{\epsilon}_{n}$,
\[
    \int_{A^{\epsilon}_n} \left|\gam{\frac{|\nabla^{s} v_n|^2}{2t_n^2}} - \gamma(\infty) \right||\nabla^{s}v_n| |\nabla^s w |dx \leq 2\gamma_{\max}\,\epsilon\int_{A^{\epsilon}_n} |\nabla^s w |dx \leq 2\gamma_{\max}
\sqrt{|\Omega|}\,\epsilon\,\|\nabla^s w\|_{L^2(A^{\epsilon}_n)}.
\]
Also, in view of the convergence of $\gamma(z)$ as $|z|\to\infty$~\ref{item:gamma:1}, and noting that $|\nabla^s v_n|>\epsilon$ on $B^n_m$ plus $t_n\to 0$, there exists $N_{\epsilon}\in\N$ so that
\[
    \left|\gam{\frac{|\nabla^{s} v_n|^2}{2t_n^2}} -\gamma(\infty) \right|< \epsilon\;\;\text{ on }\; B_n^m,\text{ provided }\; n\geq N_{\epsilon}.
\] 
Thus, we derive a bound on ${\rm II}^{\epsilon}_{n}$ via H\"older's inequality
\[
    \int_{B^{\epsilon}_n} \left|\gam{\frac{|\nabla^{s} v_n|^2}{2t_n^2}} - \gamma(\infty) \right|| \nabla^{s}v_n| |\nabla^s w |dx \leq \epsilon\,\| \nabla^s v_n\|_{L^2(\Omega)} \| \nabla^s w\|_{L^2(\Omega)}\leq \sqrt{M}\,\epsilon\, \| \nabla^s w\|_{L^2(\Omega)}.
\]
We have deduced that, for every $\epsilon>0$ and any $n\geq N_{\epsilon}$. 
\[
    |{\rm I}^{\epsilon}_{n}+{\rm II}^{\epsilon}_{n}|\leq\int_{\Omega} \left|\gam{\frac{|\nabla^{s} v_n|^2}{2t_n^2}} - \gamma(\infty) \right|| \nabla^{s}v_n| |\nabla^s w |dx \leq C(\Omega)\,\epsilon\,\|\nabla^s w\|_{L^2(\Omega)}.
\]
where $C(\Omega)=2\gmax\sqrt{|\Omega|}+\sqrt{M}$. The desired convergence is now established.
\end{proof}
\end{appendices}
\bigskip
\bigskip


\bibliography{sn-article.bib}

\end{document}